\providecommand{\noopsort[1]{}}
\numberwithin{equation}{section}
\newtheorem{thm}{Theorem}[section]
\newtheorem{prop}[thm]{Proposition}
\newtheorem{lm}[thm]{Lemma}
\theoremstyle{remark}
\newtheorem{rmk}[thm]{Remark}
\newtheorem{rmks}[thm]{Remarks}
\newcommand{\coloneqq}{\mathrel{\mathop:}=}
\renewcommand{\Re}{{\rm Re}\,}
\renewcommand{\Im}{{\rm Im}\,}
\newcommand{\sg}{\mathrm{sign}}
\newcommand{\loc}{\mathrm{loc}}
\newcommand{\p}{\mathcal{P}}
\newcommand{\A}{\mathrm{a}}
\newcommand{\R}{\mathds{R}}
\newcommand{\C}{\mathds{C}}
\newcommand{\K}{\mathds{K}}
\newcommand{\N}{\mathds{N}}
\newcommand{\tnorm}[1]{{|\kern-0.3ex|\kern-0.3ex|#1|\kern-0.3ex|\kern-0.3ex|}}
\begin{document}
\title[Vector--valued elliptic operators]{On the $L^p$--theory of vector--valued elliptic operators}
\author{K. KHALIL}
\address{Department of Mathematics, Faculty of Sciences Semlalia, Cadi Ayyad University, B.P. 2390-40000, Marrakesh-Morocco.}
\email{kamal.khalil.00@gmail.com}

\author {A. MAICHINE}
\address{Mohammed VI Polytechnic University, Lot 660, Hay Moulay Rachid Ben Guerir, 43150, Morocco}
\email{abdallah.maichine@um6p.ma}
\date{}
\keywords{Elliptic operator, semigroup, sesquiliner form, system of PDE, local elliptic regularity, maximal domain}
\subjclass[2010]{Primary: 35K40, 47F05; Secondary: 47D06, 35J47, 47A50}
\begin{abstract}
In this paper, we study vector--valued elliptic operators of the form $\mathcal{L}f:=\mathrm{div}(Q\nabla f)-F\cdot\nabla f+\mathrm{div}(Cf)-Vf$ acting on vector--valued functions $f:\R^d\to\R^m$ and involving coupling at zero and first order terms.
We prove that  $\mathcal{L}$ admits realizations in $L^p(\R^d,\R^m)$, for $1<p<\infty$, that generate analytic strongly continuous semigroups provided that $V=(v_{ij})_{1\le i,j\le m}$ is a matrix potential with locally integrable entries satisfying a sectoriality condition, the diffusion matrix $Q$ is symmetric and uniformly elliptic and the drift coefficients $F=(F_{ij})_{1\le i,j\le m}$ and $C=(C_{ij})_{1\le i,j\le m}$ are such that $F_{ij},C_{ij}:\R^d\to\R^d$ are bounded.
 We also establish a result of local elliptic regularity for the operator $\mathcal{L}$, we investigate on the $L^p$-maximal domain of $\mathcal{L}$ and we characterize the positivity of the associated semigroup.
\end{abstract}
\maketitle

\section{Introduction}
The present paper deals with a class of vector--valued elliptic operators of the form
\begin{equation}
\mathcal{L}f=\mathrm{div}(Q\nabla f)-F\cdot\nabla f+\mathrm{div}(Cf)-Vf  \label{def of L}
\end{equation}
 acting on smooth functions $f:\R^d\to\R^m$, for some integers $ d,m \geq 1 $, and involving coupling through the first and zero order terms.
 More precisely, for $f=(f_1,\dots,f_m):\R^d\to\R^m$, one has
 \begin{equation*}
  (\mathcal{L}f)_i=\mathrm{div}(Q\nabla f_i)-\sum_{j=1}^{m}F_{ij}\cdot\nabla f_j+\sum_{j=1}^{m}\mathrm{div}(f_j C_{ij})-\sum_{j=1}^{m}v_{ij}f_j 
 \end{equation*}
 for each $i\in\{1,\dots,m\}$.\\
 
 We point out that the operator $\mathcal{L}$ appears in the study of Navier-Stokes equations. More precisely, in \cite{Triebel13,Triebel15}, H. Triebel used a reduced form of Navier--Stokes type equations on $\R^n$ (where $d=m=n $ in such case) that matches vector--valued semilinear parabolic evolution equations via the Leray/Helmoltz projector, see \cite[Chapter 6]{Triebel13} for details. 
Moreover, a similar reduction method were applied in \cite{HieberRha,hieber} to convert Navier-Stokes equation to a semilinear parabolic system. The linear operator in \cite{HieberRha,hieber} is more appropriate to our situation.   Besides, parabolic systems appear also 
 in the study of Nash equilibrium for stochastic differential games, see \cite{Fried71,Fried72,Mann04} and \cite[Section 6]{aalt}.\\
 
 
In the scalar case, the theory of elliptic operators, is by now well understood, see \cite{Ouha-book}  and  \cite{Luca} for bounded  and unbounded coefficients respectively. However, the situation is quite different in the vector--valued case. Indeed, the interest into operators as in \eqref{def of L} in the whole space with possibly unbounded coefficients has started only in 2009 by Hieber et al. \cite{hetal09} with coupling through the lower order term of the elliptic operator and the motivation were the Navier-Stokes equation. Afterwards, few papers appeared, see \cite{aalt,ALP,DL11,KLMR,KMR,Mai,Mai-Rha}. In \cite{aalt,ALP,DL11} the authors studied the associated parabolic equation in $C_b$-spaces, assuming, among others, that the coefficients of the elliptic operator are H\"older continuous. In \cite{DL11}, solution to the parabolic system has been extrapolated to the $L^p$-scale provided the uniqueness.\\

In what concerns a Schr\"odinger type operator $\mathcal{A}=\mathrm{div}(Q\nabla\cdot)-V$, which corresponds to $F=C=0$ in \eqref{def of L}, and its associated semigroup, a comprehensive study in $L^p$-spaces can be find in \cite{KLMR,KMR,Mai,Mai-Rha}. Indeed, in \cite{Mai}, it has been associated a sesquilinear form to $\mathcal{A}$, for symmetric potential $V$, and it has been established a consistent $C_0$-semigroup in $L^p(\R^d,\R^m)$, $p\ge1$, which is analytic for $p\neq1$. This is done by assuming that $V$ is pointwisely semi-definite positive with locally integrable entries and $Q$ is symmetric, bounded and satisfies the well-known ellipticity condition. Moreover, the author investigated on compactness and positivity of the semigroup. In \cite{KMR}, the authors associated a $C_0$-semigroup, in $L^p$-spaces, which is not necessarily analytic, to the Schr\"odinger operator with typically nonsymmetric potential, provided that the diffusion matrix $Q$ is, in addition to the ellipticity condition, differentiable, bounded together with its first derivatives, $V$ is semi--definite positive and its entries are locally bounded. Here, the authors followed the approach adopted by Kato in \cite{Kato} for scalar Schr\"odinger operators with complex potential. The main tool has been local elliptic regularity and a Kato's type inequality for vector--valued functions, i.e.,
\[ \Delta_Q |f|\ge \frac{1}{|f|}\sum_{j=1}^{m}f_j\Delta_Q f_j \chi_{\{ f\neq 0 \} },
\]
for smooth functions $f:\R^d\to\R^m$, where $\Delta_Q:=\mathrm{div}(Q\nabla\cdot)$, see \cite[Proposition 2.3]{KMR}. Further properties such as maximal domain and others have been also investigated. The papers \cite{KLMR,Mai-Rha} focused on the domain of the operator and further regularity properties. So that, under growth and smoothness assumptions on $V$, the authors coincide the domain of $\mathcal{A}$ with its natural domain $W^{2,p}(\R^d,\R^m)\cap D(V_p)$, for $p\in(1,\infty)$, where $D(V_p)$  refers to the domain of multiplication by $V$ in $L^p(\R^d,\R^m)$. Furthermore, ultracontractivity, kernel estimates and, in the case of a symmetric potential, asymptotic behavior of the eigenvalues have been considered in \cite{Mai-Rha}.\\

In this article, using form methods and extrapolation techniques, we give a general framework of existence of analytic strongly continuous semigroup $\{S_p(t)\}_{t\geq 0}$ associated to suitable realizations of $\mathcal{L}$ in $L^p$-spaces, for $1<p < \infty$, under mild assumptions on the coefficients of $\mathcal{L}$. Namely, we assume that $Q$ is bounded and elliptic, $F$ and $C$ are bounded with a semi--boundedness condition on their divergences and $V$ has locally integrable entries and satisfies the following pointwise sectoriality condition
\begin{equation*}
|\Im\langle V(x)\xi,\xi\rangle|\le M\,\Re\langle V(x)\xi,\xi\rangle,
\end{equation*}
for all $x\in\R^d$ and all $\xi\in\C^m$. For further regularity, we assume that the entries of $Q$ are in $C_b^1(\R^d)$ and $V$ is locally bounded.  Note that, in \cite[Proposition 5.4]{KMR}, see also \cite[Proposition 4.5]{Mai-Rha}, the above inequality has been stated as a sufficient condition for the analyticity of the semigroup generated by realizations of $\mathcal{A}$ in $ L^p(\R^d,\R^m)$, $p\in(1,\infty)$. Moreover, by \cite[Example 4.3]{KLMR}, one can see that without such a condition one may not have an analytic semigroup. Note also that, even in the scalar case, the existence of a semigroup in $L^p$-spaces associated to elliptic operators with unbounded drift and/or diffusion terms is not a general fact, see \cite{PRS06} and \cite[Propostion 3.4 and Proposition 3.5]{MS12}. Furthermore, we point out that coupling through the diffusion (second order) term does not lead to $L^p$-contractive semigroups, see \cite{Cialdea}.\\

On the other hand, we establish a result of local elliptic regularity for solutions to elliptic systems, see Theorem \ref{thm ellip reg}. Namely, for given two vector--valued locally $p$--integrable functions $f ,g \in L^p_\loc(\R^d,\R^m)$ satisfying $\mathcal{L}f=g$ in a weak sense (distribution sense). Then $ f $ belongs to $W^{2,p}_{loc}(\R^d,\R^m)$, for $p\in(1,\infty)$. This result generalizes \cite[Theorem 7.1]{Agmon} to the vector--valued case. Thanks to this result we prove that the domain $D(L_p )$ of $L_p$, for $p\in(1,\infty)$, coincides with the maximal domain : 
\begin{equation*}
D_{p,\max}(\mathcal{L}):=\{f\in L^p(\R^d,\R^m)\cap W^{2,p}_{\loc}(\R^d;\R^m) : \mathcal{L}f\in L^p(\R^d;\R^m)\}.
\end{equation*}
We also characterize the positivity of the semigroup $\{S_p(t)\}_{t\geq 0}$. We prove that $\{S_p(t)\}_{t\geq 0}$ is positive if, and only if, the operator $\mathcal{L}$ is coupled only through the potential term and the coupling coefficients $v_{ij}$, $i\neq j$, are negative or null. \\


The organization of this paper is as follows: in Section \ref{sect 2}, we associate a sesquilinear form to the operator $\mathcal{L}$ in $L^2(\R^d,\mathds{C}^m)$ and we deduce the existence of an analytic $C_0$--semigroup $ \{ S_{2}(t) \}_{t\ge 0} $ associated to $\mathcal{L}$. In Section \ref{sect 3}, we prove that $ \{ S_{2}(t) \}_{t\ge 0} $ is quasi $L^\infty$--contractive and we extend $ \{ S_{2}(t) \}_{t\ge 0} $ to an analytic $C_0$--semigroup in $L^p(\R^d,\C^m)$ by extrapolation techniques. In Section \ref{sec. maximal domain}, we establish a local elliptic regularity result and we show that the domain of the generator of $ \{ S_{2}(t) \}_{t\ge 0} $ coincides with the maximal domain of $\mathcal{L}$ in $L^p(\R^d,\R^m)$, for $p\in(1,\infty)$. Section \ref{sect 5} is devoted to determine the positivity of $ \{ S_{2}(t) \}_{t\ge 0} $.

\subsection*{Notation} Let  $\K$ denotes the fields   $\R$ or $\C$, $d,m \geq 1$ any integers, $\langle\cdot,\cdot\rangle$ the inner-product of $\K^N$, $N=d,m$. So that, for $x=(x_1,\dots,x_N)$, $y=(y_1,\dots,y_N)$ in $\R^N$, $\langle x,y\rangle=\displaystyle\sum_{i=1}^{N}x_i \bar{y}_i$ and $x\cdot y=\displaystyle\sum_{i=1}^{N}x_i y_i$.

The space $L^p(\R^d,\K^m)$, $1<p<\infty$, is the vector--valued Lebesgue space endowed with the norm $$\|\cdot\|_p: f=(f_1,\dots,f_m)\mapsto\|f\|_p\coloneqq \left(\int_{\R^d}(\sum_{j=1}^{m}|f_j|^2)^{\frac{p}{2}}dx\right)^{\frac{1}{p}}.$$
We denote by $\langle\cdot,\cdot\rangle_{p,p'}$ the duality product between $L^p(\R^d,\K^m)$ and $L^{p'}(\R^d,\K^m)$ for $1<p<\infty$ where $p' =\frac{p}{p-1}$. For $p=2$, we denote it simply by $\langle\cdot,\cdot\rangle_2$.\\ We write $f\in L^p_\loc(\R^d,\K^m)$ if $\chi_{B}f$ belongs to $L^p(\R^d,\K^m)$ for every bounded $B\subset\R^d$, with $\chi_{B}$ is the indicator function of $B$.\\
For $k\in\N$, $W^{k,p}(\R^d ,\K^m)$ denotes the vector--valued Sobolev space constituted of vector--valued functions $f=(f_1,\dots,f_m)$ such that $f_j\in W^{k,p}(\R^d )$, for all $j\in\{1,\dots,m\}$, where $W^{k,p}(\R^d )$ is the classical Sobolev space of order $k$ over $L^p(\R^d)$. Note that all the derivatives are considered in the distribution sense. $W_{loc}^{k,p}(\R^d ,\K^m)$ is the set of all measurable functions $f$ such that the distributional derivative $\partial^\alpha f$ belongs to $L_{loc}^p(\R^d ,\K^m)$, for all $\alpha\in\N^d$ such that $|\alpha|\le k$.  For $y=(y_1,\dots,y_m)\in\R^m$, we write $y\ge 0$ if $y_j\ge 0$ for all $j\in\{1,\dots,m\}$. 

\section{The sesquilinear form and the semigroup in $L^2(\R^d,\C^m)$}\label{sect 2}
We consider the following differential expression
\begin{equation}\label{def 1 of L}
\mathcal{L}f=\mathrm{div}(Q\nabla f)-F\cdot\nabla f+\mathrm{div}(Cf)-Vf,
\end{equation}
where $f:\R^d\to\R^m$ and the derivatives are considered in the sense of distributions. Here, $Q=(q_{ij})_{1\le i,j\le d}$ and $V=(v_{ij})_{1\le i,j\le m}$ are matrices where the entries are scalar functions: $v_{ij},q_{ij}:\R^d\to\R$, and $F=(F_{ij})_{1\le i,j\le m}$ and $C=(C_{ij})_{1\le i,j\le m}$ are matrix functions with vector--valued entries: $F_{ij}, C_{ij}:\R^d\to\R^d$. So that $$(\mathrm{div}(Q\nabla f))_i=\mathrm{div}(Q\nabla f_i),$$ 
$$(F\cdot\nabla f)_i=\sum_{j=1}^{m}\langle F_{ij},\nabla f_j \rangle $$
$$(\mathrm{div}(Cf))_i=\sum_{j=1}^{m}\mathrm{div}(f_j C_{ij})$$ and $$  (Vf)_i = \sum_{j=1}^{m} v_{ij} f_j  $$  
for each $i\in\{1,\dots,m\}$.\\
Actually, for $f=(f_1,\dots,f_m)\in W^{1,p}_\loc(\R^d,\C^m)$ for some $1<p<\infty$, $\mathrm{div}(Q\nabla f)$, $F\cdot\nabla f$ and $\mathrm{div}(Cf)$ are vector--valued distributions and are defined as follow
\[(\mathrm{div}(Q\nabla f),\phi)=-\int_{\R^d}\sum_{i=1}^{m}\langle Q\nabla f_i ,\nabla \phi_i \rangle\, dx,\]
\[(F\cdot\nabla f,\phi)=\sum_{j=1}^{m}\int_{\R^d} (F_{ij}\cdot\nabla f_j)\bar{\phi}_i\, dx ,\]
and 
\[(\mathrm{div}(Cf),\phi)=-\sum_{j=1}^{m}\int_{\R^d}f_j \langle C_{ij},\nabla \phi_i\rangle\, dx\]
for every $\phi=(\phi_1,\dots,\phi_m)\in C_c^\infty(\R^d,\C^m)$.\\

Throughout this paper we make the following assumptions\\
\textbf{Hypotheses (H1)}:
\begin{itemize}
\item $Q:\R^d\to\R^{d\times d}$ is measurable such that, for every $x\in\R^d$, $Q(x)$ is symmetric and there exist $\eta_1,\eta_2>0$ such that
\begin{equation}
\eta_1|\xi|^2\le\langle Q(x)\xi,\xi\rangle\le \eta_2|\xi|^2,
\end{equation}
for all $x,\xi\in\R^d$.\\
\item $F_{ij}, C_{ij}\in L^\infty(\R^d,\R^d)$, for all $i,j\in\{1,\dots,d\}$.\\
\item $v_{ij}\in L^1_\loc (\R^d)$, for every $i\in\{1,\dots,m\}$ and there exists $M>0$ such that
\begin{equation}\label{cond on V}
|\Im\langle V(x)\xi,\xi\rangle|\le M\,\Re\langle V(x)\xi,\xi\rangle,
\end{equation}
for all $x\in\R^d$ and all $\xi\in\C^m$.
\end{itemize}
Let us define, for every $x\in\R^d$, $V_s(x):=\frac{1}{2}(V(x)+V^*(x))$ to be the symmetric part of $V(x)$, where $V^*(x)$ is the conjugate matrix of $V(x)$. $V_{as}(x):=V(x)-V_s(x)$ denotes the antisymmetric part of $V(x)$.

We start by a technical lemma
\begin{lm}\label{lm sect of V}
Let $x\in\R^d$ and assume $V$ satisfying \eqref{cond on V}. Then

\begin{equation}\label{C-S for V(x)}
|\langle V(x)\xi_1,\xi_2\rangle|\le (1+M) \langle V_s(x)\xi_1,\xi_1\rangle^{1/2}\langle V_s(x) \xi_2,\xi_2\rangle^{1/2}
\end{equation}
for every $\xi_1,\xi_2\in\C^m$. Moreover, the inequality holds true also when substituting $V$ by $V_{as}$.\\
In particular,
\begin{equation}\label{C-S for V in L^2}
\left\vert\int_{\R^d}\langle V_{as}(x)f(x),{g}(x)\rangle\,dx\right\vert \le (1+M)\|V_s^{1/2}f\|_{L^2(\R^d,\C^m)}\| V_s^{1/2} g\|_{L^2(\R^d,\C^m)}.
\end{equation}
for every measurable $f$ and $g$ such that $V_s^{1/2} f,\, V_s^{1/2} g\in L^2(\R^d,\C^m)$.
\end{lm}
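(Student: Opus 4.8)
The plan is to exploit that the sectoriality condition \eqref{cond on V} forces the symmetric part $V_s(x)$ to be positive semidefinite (since $M\,\Re\langle V(x)\xi,\xi\rangle\ge|\Im\langle V(x)\xi,\xi\rangle|\ge 0$ and $\langle V_s(x)\xi,\xi\rangle=\Re\langle V(x)\xi,\xi\rangle$), so that $\langle V_s(x)\cdot,\cdot\rangle$ is a nonnegative Hermitian form on $\C^m$. For such a form the Cauchy--Schwarz inequality is automatic, which already gives \eqref{C-S for V(x)} with $V_s$ in place of $V$ and constant $1$. Since $V=V_s+V_{as}$, once I have the corresponding bound for $V_{as}$ with constant $M$, the triangle inequality yields \eqref{C-S for V(x)} for $V$ with the stated constant $1+M$ (and the bound for $V_{as}$ then holds a fortiori with $1+M$). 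Thus the whole matter reduces to proving
\[
|\langle V_{as}(x)\xi_1,\xi_2\rangle|\le M\,\langle V_s(x)\xi_1,\xi_1\rangle^{1/2}\langle V_s(x)\xi_2,\xi_2\rangle^{1/2},\qquad \xi_1,\xi_2\in\C^m.
\]

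To prove this pointwise inequality I would first regularize: replacing $V$ by $V+\eps I$ leaves $V_{as}$ and $\Im\langle V(x)\xi,\xi\rangle$ unchanged, only increases $\Re\langle V(x)\xi,\xi\rangle$, hence preserves \eqref{cond on V} with the same $M$, while making $V_s^\eps:=V_s+\eps I$ strictly positive definite. On the regularized matrix I can factor $V_{as}=(V_s^\eps)^{1/2}K_\eps(V_s^\eps)^{1/2}$ with $K_\eps:=(V_s^\eps)^{-1/2}V_{as}(V_s^\eps)^{-1/2}$; since $V_{as}$ is real antisymmetric it is skew-Hermitian, and as $(V_s^\eps)^{1/2}$ is Hermitian, $K_\eps$ is skew-Hermitian as well. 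Writing $u=(V_s^\eps)^{1/2}\xi_1$ and $v=(V_s^\eps)^{1/2}\xi_2$ turns the left-hand side into $\langle K_\eps u,v\rangle$ and the right-hand side into $M|u|\,|v|$, so it suffices to bound the operator norm $\|K_\eps\|\le M$.

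The key point is that $K_\eps$ is \emph{normal}, being skew-Hermitian, so its operator norm coincides with its numerical radius $\sup_{|u|=1}|\langle K_\eps u,u\rangle|$. The diagonal case $\xi_1=\xi_2=(V_s^\eps)^{-1/2}u$ of the regularized condition \eqref{cond on V} reads exactly $|\langle K_\eps u,u\rangle|\le M|u|^2$, whence $\|K_\eps\|\le M$. An application of the ordinary Cauchy--Schwarz inequality to $\langle K_\eps u,v\rangle$ then gives the desired bound with $V_s^\eps$ in place of $V_s$, and letting $\eps\to 0$ (so that $\langle V_s^\eps\xi,\xi\rangle\to\langle V_s\xi,\xi\rangle$) yields the pointwise inequality for $V_{as}$. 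This passage from the diagonal bound \eqref{cond on V} to the full off-diagonal estimate is the main obstacle, and it is precisely the normality of $K_\eps$ (equality of numerical radius and operator norm) that makes it go through; the regularization is what lets me invert $V_s$, which may otherwise be degenerate.

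Finally, \eqref{C-S for V in L^2} follows by applying the pointwise inequality with $\xi_1=f(x)$ and $\xi_2=g(x)$, integrating over $\R^d$, and then using the Cauchy--Schwarz inequality in $L^2(\R^d)$ together with the identity $\int_{\R^d}\langle V_s(x)h(x),h(x)\rangle\,dx=\|V_s^{1/2}h\|_{L^2(\R^d,\C^m)}^2$ for $h\in\{f,g\}$.
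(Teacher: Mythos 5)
Your proof is correct, but it takes a more explicit route than the paper, which disposes of \eqref{C-S for V(x)} in one line by observing that $\langle V(x)\cdot,\cdot\rangle$ is a sectorial sesquilinear form on $\C^m$ (since $\Re\langle V(x)\xi,\xi\rangle=\langle V_s(x)\xi,\xi\rangle$) and invoking the generalized Cauchy--Schwarz inequality for sectorial forms from \cite[Proposition 1.8]{Ouha-book}, then finishing \eqref{C-S for V in L^2} with Cauchy--Schwarz in $L^2$ exactly as you do. Your argument instead reproves that abstract inequality from scratch in the finite-dimensional setting: the decomposition $V=V_s+V_{as}$, the $\eps$-regularization making $V_s^\eps$ invertible, the factorization $V_{as}=(V_s^\eps)^{1/2}K_\eps(V_s^\eps)^{1/2}$ with $K_\eps$ skew-Hermitian, and the identification of the operator norm of the normal matrix $K_\eps$ with its numerical radius (which the diagonal sectoriality bound controls by $M$) are all sound, and the limit $\eps\to0$ and the triangle inequality correctly assemble the constant $1+M$. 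What your approach buys is a self-contained, purely linear-algebraic proof that also isolates the sharper constant $M$ for the antisymmetric part alone; what the paper's citation buys is brevity and the observation that the statement is nothing but the standard sectorial-form inequality in disguise. One cosmetic point: the skew-Hermitian character of $V_{as}$ does not actually require $V$ to be real-valued, since $V_{as}=\tfrac12(V-V^*)$ is skew-Hermitian for any $V$, so you could drop the appeal to real antisymmetry.
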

\begin{proof}
For $x\in\R^d$, $\langle V(x)\cdot,\cdot\rangle$ is a sesquilinear form over $\C^m$. Taking into the account that, for every $\xi\in\C^m$, $ \Re\langle V(x)\xi,\xi\rangle=\langle V_s(x)\xi,\xi\rangle .$
Then, \eqref{C-S for V(x)} follows by \eqref{cond on V} and \cite[Proposition 1.8]{Ouha-book}. Moreover, \eqref{C-S for V(x)} holds true also when taking $V_{as}$ instead of $V$ in the left hand side of the inequality. Now, Cauchy Schwartz inequality yields \eqref{C-S for V in L^2}. 
\end{proof}

Let us now consider the sesquilinear form $\A$ given by
\begin{align*}
\A(f,g) &:=\sum_{i=1}^{m}\int_{\R^d}\langle Q\nabla f_i ,\nabla g_i \rangle\, dx+\sum_{i,j=1}^{m}\int_{\R^d} (F_{ij}\cdot\nabla f_j) \bar{g}_i\, dx\\
&+\sum_{i,j=1}^{m}\int_{\R^d}f_j \langle C_{ij},\nabla g_i\rangle \, dx+\int_{\R^d}\langle V f,g\rangle\, dx,
\end{align*}
with domain
\[ D(\A)=\{f\in H^1(\R^d,\C^m): \int_{\R^d}\langle V_s f,f\rangle dx<\infty\}:=D(\A_0),
\]
where
\[ \A_0(f,g)=\sum_{j=1}^{m}\langle Q\nabla f_j,\nabla{g}_j\rangle_{2}+\int_{\R^d}\langle V_s(x)f(x),{g}(x)\rangle\, dx .\] 
 The form $\A$ satisfies the following properties 
\begin{prop}\label{prop properties of a}
Assume Hypotheses (H1) are satisfied. Then,
\begin{itemize}
\item $\A$ is densely defined; 
\item there exists $\omega>0$ such that $\A_\omega:=\A+\omega$ is accretive: $\Re \A(f)+\omega\|f\|_2^2\ge0$, for all $f\in D(\A)$;
\item $\A$ is continuous;
\item $\A$ is closed on $D(\A)$. 
\end{itemize}
\end{prop}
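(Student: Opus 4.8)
The plan is to check the four items in order, the unifying theme being the decomposition $V=V_s+V_{as}$ together with Lemma~\ref{lm sect of V}, through which the pointwise sectoriality \eqref{cond on V} enters every estimate. Observe first that \eqref{cond on V} forces $\langle V_s(x)\xi,\xi\rangle=\Re\langle V(x)\xi,\xi\rangle\ge0$, so $V_s(x)$ is positive semi--definite. For density, this nonnegativity together with $v_{ij}\in L^1_\loc$ gives, for any $f\in C_c^\infty(\R^d,\C^m)$,
\begin{equation*}
\int_{\R^d}\langle V_s f,f\rangle\,dx\le \|f\|_\infty^2\int_{\supp f}\sum_{i,j}|(V_s)_{ij}|\,dx<\infty,
\end{equation*}
so that $C_c^\infty(\R^d,\C^m)\subset D(\A_0)=D(\A)$; density of $\A$ then follows from the density of $C_c^\infty(\R^d,\C^m)$ in $L^2(\R^d,\C^m)$.

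For quasi--accretivity I would evaluate $\Re\A(f,f)$ termwise. Ellipticity bounds the diffusion term below by $\eta_1\|\nabla f\|_2^2$; the potential term yields $\Re\int_{\R^d}\langle Vf,f\rangle\,dx=\int_{\R^d}\langle V_s f,f\rangle\,dx=\|V_s^{1/2}f\|_2^2\ge0$, the antisymmetric part contributing nothing since $\langle V_{as}(x)\xi,\xi\rangle$ is purely imaginary. The two drift terms are estimated through the $L^\infty$--bounds on $F_{ij},C_{ij}$ and the Cauchy--Schwarz inequality by a multiple of $\|\nabla f\|_2\|f\|_2$, which Young's inequality splits into $\tfrac{\eta_1}{2}\|\nabla f\|_2^2+C\|f\|_2^2$. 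Absorbing the gradient part into the diffusion term produces
\begin{equation*}
\Re\A(f,f)\ge \tfrac{\eta_1}{2}\|\nabla f\|_2^2+\|V_s^{1/2}f\|_2^2-\omega\|f\|_2^2
\end{equation*}
for a suitable $\omega>0$; this is precisely the accretivity of $\A_\omega$, and I will reuse it as a coercivity estimate below.

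Continuity I would establish by dominating each term by the natural norm $\|f\|_{\A_0}^2:=\|\nabla f\|_2^2+\|V_s^{1/2}f\|_2^2+\|f\|_2^2$ on $D(\A_0)$. The diffusion term is bounded by $\eta_2\|\nabla f\|_2\|\nabla g\|_2$ and the drift terms by the $L^\infty$--bounds and Cauchy--Schwarz; for the potential term I would apply \eqref{C-S for V(x)} pointwise and integrate, obtaining $|\int_{\R^d}\langle Vf,g\rangle\,dx|\le(1+M)\|V_s^{1/2}f\|_2\|V_s^{1/2}g\|_2$ exactly as in the derivation of \eqref{C-S for V in L^2}. Summing gives $|\A(f,g)|\le C\|f\|_{\A_0}\|g\|_{\A_0}$.

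The crux is closedness, where the obstacle is that $V_{as}$ is \emph{not} a small form perturbation of $\A_0$: by \eqref{C-S for V in L^2} its relative form bound is at most $(1+M)$, possibly $\ge1$, so the standard theorem that a relatively small perturbation preserves closedness is unavailable. The way around this is that $V_{as}$ disappears from $\Re\A$, so instead of perturbation I would prove that the form norm $\|f\|_{\A_\omega}^2:=\Re\A(f,f)+(1+\omega)\|f\|_2^2$ is equivalent to $\|f\|_{\A_0}^2$. The upper estimate is the continuity bound of the previous step, while the lower estimate is exactly the coercivity inequality from the accretivity step. Since equivalent norms have the same Cauchy sequences, it remains to check that $(D(\A_0),\|\cdot\|_{\A_0})$ is complete: a Cauchy sequence $(f_n)$ converges in $H^1(\R^d,\C^m)$ to some $f$, while $(V_s^{1/2}f_n)$ converges in $L^2$ to some $g$; passing to an a.e.--convergent subsequence and using that $V_s^{1/2}(x)$ is a fixed matrix at each $x$ identifies $g=V_s^{1/2}f$, whence $f\in D(\A_0)$ and $f_n\to f$ in $\|\cdot\|_{\A_0}$. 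This yields closedness of $\A$ and finishes the proof.
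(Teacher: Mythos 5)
Your proof is correct and follows essentially the same route as the paper: the decomposition $V=V_s+V_{as}$, Young's inequality for quasi--accretivity, Lemma \ref{lm sect of V} for continuity, and closedness via the equivalence of the form norm with $\|\cdot\|_{\A_0}$ (correctly noting that $V_{as}$ drops out of $\Re\A$ rather than being a small perturbation). The only difference is that you prove completeness of $(D(\A_0),\|\cdot\|_{\A_0})$ directly, where the paper cites \cite[Proposition 2.1]{Mai}; your argument for this is the standard one and is fine.
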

\begin{proof}
Clearly, $C_c^\infty(\R^d,\C^m)\subseteq D(\A)$ and thus, $\A$ is densely defined. Moreover, by application of Young's inequality, one obtains, for every $f\in D(\A)$ and every $\varepsilon>0$,
\begin{align*}
\Re\A(f) &=\sum_{i=1}^{m}\int_{\R^d}|\nabla f_i|_Q^2\, dx+\sum_{i,j=1}^{m}\int_{\R^d}\Re\left( (F_{ij}\cdot\nabla f_j)\bar{f}_i\right) \, dx\\
& \quad+\sum_{i,j=1}^{m}\int_{\R^d}\Re\left( f_j \langle C_{ij} ,\nabla f_i\rangle\right) \, dx+\int_{\R^d}\Re\langle V f,f\rangle\, dx\\
& \ge \eta_1\sum_{i=1}^{m}\int_{\R^d}|\nabla f_i|^2\,dx -(\|F\|_{\infty}+\|C\|_{\infty})\int_{\R^d}\sum_{i=1}^{m}|f_i|\sum_{i=1}^{m}|\nabla f_i|\,dx\\
& \ge (\eta_1-\varepsilon)\int_{\R^d}\sum_{i=1}^{m}|\nabla f_i|^2\,dx- c_\varepsilon \int_{\R^d}\sum_{i=1}^{m}|f_i|^2\,dx.
\end{align*}
So by choosing $\varepsilon=\eta_1/2$ and $\omega\ge c_{\eta_1/2}$, one obtains $\Re\A(f)+\omega\|f\|_2^2\ge0$, which shows that $\A_\omega$ is accretive.\\
On the other hand, according to \cite[Proposition 2.1]{Mai}, $(D(\A),\|\cdot\|_{\A_0})$ is a Banach space, where
$$ \|\cdot\|_{\A_0}:=\sqrt{\|\cdot\|_{2}^2+\A_0(\cdot)}.$$
It is then enough to show that $\|\cdot\|_\A$ is equivalent to $\|\cdot\|_{\A_0}$ to conclude the closedness of $\A$, where $\|\cdot\|_\A$ is the graph norm associated to $\A$ and it is given by $$\|\cdot\|_\A:=\sqrt{(1+\omega)\|\cdot\|_2^2+\Re a(\cdot)}.$$
Here $\omega$ is such that $\A_\omega$ is accretive. Let us first prove that $\|\cdot\|_{\A}\lesssim\|\cdot\|_{\A_0}$. Let $f\in D(A)$, one has $ \\A(f)=\A_0(f)+b(f)$, where
\[ \mathrm{b}(f):=\sum_{i,j=1}^{m}\int_{\R^d}(F_{ij}\cdot\nabla f_j) \bar{f}_i \, dx+\sum_{i,j=1}^{m}\int_{\R^d}f_j \langle C_{ij} ,\nabla f_i\rangle\, dx.
\]
The claim then follows by application of Young's inequality when estimating $\mathrm{b}$ as in the align above. Conversely, since $\A_0(f)= \A(f)-b(f)$, in a similar way one deduces that $\|\cdot\|_{\A_0}\lesssim\|\cdot\|_{\A}$.\\
It remains to show that $\A$ is continuous in $(D(A),\|\cdot\|_{\A_0})$, that is
\[ |\A(f,g)|\le c\|f\|_{\A_0}\|g\|_{\A_0},\qquad\forall f,g\in D(\A). \]
In view of \eqref{C-S for V in L^2}, Cauchy-Schwartz inequality and the continuity of $\A_0$, c.f. \cite[Proposition 2.1 (iii)]{Mai}, one gets
\begin{align*}
|\A(f,g)|&\le |\A_0(f,g)|+|\mathrm{b}(f,g)|+\left|\int_{\R^d}\langle V_{as}f,g\rangle\right|\\
&\le c_1\|f\|_{\A_0}\|g\|_{\A_0}+c_2\|f\|_{H^1(\R^d,\C^m)}\|g\|_{H^1(\R^d,\C^m)}+c_3 \|V_s^{1/2}f\|_{2}\| V_s^{1/2} g\|_{2}\\
&\le c\|f\|_{\A_0}\|g\|_{\A_0}.
\end{align*}

\end{proof}

We, finally, conclude the main theorem of this section as an immediate consequence of \cite[Proposition 1.51 \and Theorem 1.52]{Ouha-book} and Proposition \ref{prop properties of a}
\begin{thm}\label{thm Generation L2}
Assume Hypotheses (H1) are satisfied. Then, $\mathcal{L}$ admits a realization $L=L_2$ in $L^2(\R^d,\C^m)$ that generates an analytic $C_0$-semigroup $\{S_{2}(t)\}_{t\ge0}$. Moreover, there exists $\omega\ge0$ such that
\[\|S_{2}(t)\|_{2}\le \exp(\omega t),\quad\mathrm{for\;every}\; t\ge0.\]
\end{thm}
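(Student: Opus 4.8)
The plan is to invoke the standard form-generation theorem from Ouhabaz's book, using Proposition \ref{prop properties of a} as the input that verifies all the hypotheses. The theorem to be applied (\cite[Proposition 1.51 \& Theorem 1.52]{Ouha-book}) states that whenever a sesquilinear form $\A$ on a Hilbert space $H$ is densely defined, continuous, closed, and \emph{sectorial} — equivalently, accretive after a shift by some $\omega>0$, together with the continuity bound — then there is a unique associated operator $L$ that generates an analytic $C_0$-semigroup, with the sector of analyticity and the exponential growth bound both controlled by the constants $\omega$ and $M$ (the continuity constant) furnished by the form.

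First I would take $H=L^2(\R^d,\C^m)$ equipped with its inner product $\langle\cdot,\cdot\rangle_2$, and let $\A$ be the form constructed above with domain $D(\A)=D(\A_0)$. By Proposition \ref{prop properties of a} the form is densely defined, there is $\omega>0$ with $\A_\omega=\A+\omega$ accretive (so $\Re\A(f)+\omega\|f\|_2^2\ge 0$ for all $f\in D(\A)$), $\A$ is continuous, and $\A$ is closed on $D(\A)$. These four properties are precisely the hypotheses of the cited generation theorem once one observes that accretivity of $\A_\omega$ together with the continuity estimate means $\A_\omega$ is a \emph{sectorial} closed form: the numerical range of $\A_\omega$ lies in a sector $\{z\in\C:|\Im z|\le \kappa\,\Re z\}$, where the aperture is governed by the continuity constant $c$ from the bound $|\A(f,g)|\le c\|f\|_{\A_0}\|g\|_{\A_0}$.

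Next I would define $L=L_2$ to be the operator associated with $\A$ in the sense of forms, namely $D(L)=\{f\in D(\A): \exists\, h\in H \text{ with } \A(f,g)=\langle h,g\rangle_2 \ \forall g\in D(\A)\}$ and $Lf=-h$ (sign convention so that $\mathcal{L}=L$ matches the differential expression \eqref{def 1 of L}). A short computation, integrating by parts against test functions $\phi\in C_c^\infty(\R^d,\C^m)$ and reading off the definitions of the distributional terms $\mathrm{div}(Q\nabla f)$, $F\cdot\nabla f$ and $\mathrm{div}(Cf)$ recorded earlier, confirms that $L$ is indeed a realization of $\mathcal{L}$: for $f\in D(L)$ one recovers $Lf=\mathcal{L}f$ in the distribution sense. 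Then Theorem 1.52 of \cite{Ouha-book} yields directly that $-L$ generates an analytic $C_0$-semigroup $\{S_2(t)\}_{t\ge 0}$, and the accretivity of $\A_\omega$ translates into the quasi-contractivity estimate $\|S_2(t)\|_2\le \exp(\omega t)$ for all $t\ge 0$, with the same $\omega$ from Proposition \ref{prop properties of a}.

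Since all the analytic work has already been absorbed into Proposition \ref{prop properties of a} and Lemma \ref{lm sect of V}, there is essentially no remaining obstacle here; the only point requiring any care is bookkeeping, namely checking that the operator produced abstractly by the form really coincides with the differential expression $\mathcal{L}$ via the weak formulations, and that the sign and shift conventions line up so that the growth bound comes out as $\exp(\omega t)$ rather than with a spurious constant. The genuinely delicate estimates — ellipticity absorbing the first-order drift terms through Young's inequality, and the sectoriality of $V$ controlling the imaginary part via \eqref{C-S for V in L^2} — are exactly what Proposition \ref{prop properties of a} supplies, so the theorem follows as an immediate consequence.
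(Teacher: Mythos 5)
Your proposal is correct and follows exactly the paper's route: the theorem is deduced as an immediate consequence of Proposition \ref{prop properties of a} together with the generation theorem for densely defined, accretive (after a shift), continuous, closed sesquilinear forms from \cite[Proposition 1.51 and Theorem 1.52]{Ouha-book}. The extra bookkeeping you describe (identifying the form-associated operator with the differential expression and tracking the shift $\omega$) is exactly what the paper leaves implicit.
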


\section{Extrapolation of the semigroup to the $L^p$--scale}\label{sect 3}
In this section we extrapolate $\{S_2(t)\}_{t\ge0}$ to an analytic strongly continuous semigroup in $L^p(\R^d,\R^m)$. For that purpose, it suffices to prove that there exists $\tilde{\omega}\in\R$ such that $\{S^{\tilde{\omega}}_{2}(t):=\exp(-\tilde{\omega} t)S_{2}(t)\}_{t\ge 0}$ satisfies the following $L^\infty$-contractivity property:
\begin{equation}\label{L^8 contractivity}
\|S_{2}^{\tilde{\omega}}(t)f\|_\infty\le \|f\|_\infty,\qquad \forall f\in L^2(\R^d,\C^m)\cap L^\infty(\R^d,\C^m).
\end{equation}

From now on, we use the following notation: $$\langle y,z\rangle_{Q(x)}:=\langle Q(x)y,z\rangle$$ and $$|y|_{Q(x)}:=\sqrt{\langle Q(x)y,y\rangle},$$ for every $x,y,z$ in $\R^d$. We also drop the $x$ and denotes simply $\langle\cdot,\cdot\rangle_Q$ and $|\cdot|_Q$ for the ease of notation.

In this section we make use of the following hypotheses\\

\textbf{Hypotheses (H2)}:
\begin{itemize}
\item $F_{ij}, C_{ij}\in W^{1,\infty}_\loc(\R^d)$, for all $i,j\in\{1,\dots,m\}$, and there exists $\gamma\in\R$ such that
\begin{equation}\label{eq. div F>...}
\langle \mathrm{div}(F)(x)\xi,\xi\rangle:=\sum_{i,j=1}^{m}\mathrm{div}(F_{ij})(x)\xi_i\xi_j \le \gamma|\xi|^2 
\end{equation}
and
\begin{equation}\label{eq. div C>...}
\langle \mathrm{div}(C)(x)\xi,\xi\rangle:=\sum_{i,j=1}^{m}\mathrm{div}(C_{ij})(x)\xi_i\xi_j \le \gamma|\xi|^2 
\end{equation}
for every $\xi\in\R^m$ and $x\in\R^d$.
\end{itemize}

We state, now, the first result of this section
\begin{prop}\label{Charac Ouhab}
Assume Hypotheses (H1) and (H2). Then there exists $\tilde{\omega}\in\R$ such that $\{S^{\tilde{\omega}}_{2}(t):=\exp(-\tilde{\omega} t)S_{2}(t)\}_{t\ge 0}$  is $L^\infty$-contractive.
\end{prop}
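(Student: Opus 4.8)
The plan is to characterize $L^\infty$-contractivity of the semigroup via an Ouhabaz-type invariance criterion for the associated form. Specifically, by \cite[Theorem 2.2]{Ouha-book} (the characterization of invariance of a closed convex set under the semigroup), the rescaled semigroup $\{S_2^{\tilde\omega}(t)\}_{t\ge0}$ leaves invariant the unit ball $\mathcal{C}:=\{f\in L^2(\R^d,\C^m):\|f\|_\infty\le1\}$ — which is precisely the contractivity property \eqref{L^8 contractivity} — if and only if the orthogonal projection $P$ onto $\mathcal{C}$ maps $D(\A)$ into itself and
\[
\Re\,\A_{\tilde\omega}(Pf,f-Pf)\ge0,\qquad\forall f\in D(\A),
\]
where $\A_{\tilde\omega}=\A+\tilde\omega$. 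The projection here is the pointwise radial truncation $Pf(x)=f(x)/\max\{1,|f(x)|\}$, so that $f-Pf$ vanishes on $\{|f|\le1\}$ and equals $(1-1/|f|)f$ on $\{|f|>1\}$.

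First I would record that $P$ preserves $D(\A)$: truncation of an $H^1$ function stays in $H^1$ (this is a standard consequence of the chain rule for Sobolev functions applied to the Lipschitz map $y\mapsto y/\max\{1,|y|\}$), and since $|Pf|\le|f|$ pointwise together with the pointwise monotonicity $\langle V_s(x)Pf,Pf\rangle\le\langle V_s(x)f,f\rangle$ (both follow from $Pf$ being a contraction of $f$ and $V_s$ being positive semidefinite), the form-domain condition $\int\langle V_s Pf,Pf\rangle<\infty$ is inherited. Then I would write $f-Pf=u\,f$ with $u:=(1-1/|f|)^+$ supported on $\{|f|>1\}$ and compute $\Re\,\A(Pf,f-Pf)$ term by term, splitting into the $Q$-diffusion term, the two drift terms (involving $F$ and $C$), and the potential term. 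The potential contribution is handled using the sectoriality Lemma \ref{lm sect of V}: on $\{|f|>1\}$ one gets $\Re\langle V\,Pf,\,uf\rangle\ge0$ up to controllable factors because $Pf$ and $uf$ are positive scalar multiples of the same vector $f$, so the sign is governed by $\Re\langle V_s f,f\rangle\ge0$. The diffusion term produces a nonnegative gradient contribution after using that $\nabla|f|$ and $\nabla Pf$ align; this is exactly the kind of computation where the symmetry and ellipticity of $Q$ make $\langle Q\nabla(Pf)_i,\nabla(f-Pf)_i\rangle$ integrate to something $\ge0$.

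The drift terms are where the hypotheses (H2) enter decisively and where the main obstacle lies. The terms $\int(F_{ij}\cdot\nabla f_j)\overline{(f-Pf)_i}$ and $\int f_j\langle C_{ij},\nabla(f-Pf)_i\rangle$ have no definite sign on their own; the standard device is to integrate by parts so that the divergence of $F$ (resp. $C$) appears, and then to invoke the semi-boundedness conditions \eqref{eq. div F>...}--\eqref{eq. div C>...} to bound the resulting quadratic form in $f$ from below by $-\gamma|f|^2$-type quantities. Concretely, after integration by parts one expects a leading term of the form $\tfrac12\int\langle\mathrm{div}(F)\,\zeta,\zeta\rangle$ with $\zeta$ built from $|f|$ on $\{|f|>1\}$, plus genuinely nonnegative gradient pieces; the bad lower-order remainder is then absorbed by choosing $\tilde\omega$ large enough so that the $+\tilde\omega\|f-Pf\|_2^2\ge0$ coming from $\A_{\tilde\omega}$ (together with the $\gamma$-control) makes the whole quantity nonnegative. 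I would therefore fix $\tilde\omega$ at the end in terms of $\gamma$, $\|F\|_\infty$, $\|C\|_\infty$ and $\eta_1$. The delicate point to verify carefully is that all integrations by parts are justified — the truncation guarantees $u\,f\in H^1$ and the $W^{1,\infty}_\loc$ regularity of $F,C$ from (H2) makes $\mathrm{div}(F),\mathrm{div}(C)$ genuine locally integrable functions — and that no uncontrolled boundary-at-infinity or singular-potential terms survive, so that the invariance inequality holds for every $f\in D(\A)$, yielding the claimed $L^\infty$-contractivity after rescaling.
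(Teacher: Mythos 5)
Your proposal follows essentially the same route as the paper: Ouhabaz's invariance criterion for the unit $L^\infty$-ball, preservation of $D(\A)$ under the radial truncation $\p_f=(1\wedge|f|)\mathrm{sign}(f)$ (the paper cites \cite[Lemma 3.2]{Mai}), and a term-by-term verification of the form inequality using the sectoriality of $V$ for the potential, ellipticity plus Young's inequality for the drift--gradient interaction, integration by parts and the divergence bound for the $C$-term, and a sufficiently large shift $\tilde\omega$. The only deviations are minor: the paper verifies the variant $\Re\A_{\tilde{\omega}}(f,f-\p_f)\ge0$ from \cite[Theorem 1]{Ouhabaz 99} rather than $\Re\A_{\tilde{\omega}}(\p_f,f-\p_f)\ge0$, and it treats the $F$-term directly by Young's inequality without integrating by parts, so only \eqref{eq. div C>...} is actually invoked; your anticipated symmetrized term $\tfrac12\int\langle\mathrm{div}(F)\zeta,\zeta\rangle$ does not literally arise because the two arguments of the form differ, although the pointwise parallelism of $\p_f$, $f-\p_f$ and $f$ would still let the one-sided divergence bounds apply with a nonnegative weight if you chose that route.
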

\begin{proof}
According to the characterization of $L^\infty$-contractivity property given by \cite[Theorem 1]{Ouhabaz 99}, it suffices to prove that: for $\tilde{\omega}\ge0$ such that $\A_{\tilde{\omega}}$ is accretive, the following statements hold:
\begin{enumerate}
\item $f\in D(\A)$ implies $(1 \wedge|f|)\mathrm{sign}(f)\in D(\A)$,
\item $ \Re\A_{\tilde{\omega}}\left(f,f-(1\wedge |f|)\mathrm{sign}(f)\right)\ge 0,\qquad\forall f\in D(\A), $
\end{enumerate}
where $\mathrm{sign}(f):=\frac{f}{|f|}\chi_{\{ f\neq 0 \} }$. The first item follows by \cite[Lemma 3.2]{Mai}. Let us show (2). Set $\p_{f}:=(1\wedge |f|)\mathrm{sign}(f)$ and let $\tilde{\omega}$ be bigger enough, so that $\A_{\tilde{\omega}}$ is accretive and $\tilde{\omega}\ge\gamma$. According to \cite[Lemma 3.2]{Mai}, we claim that 
\begin{align}
\nabla(\p_{f})_i & =\dfrac{1+\sg(1-|f|)}{2}\frac{f_i}{|f|}\chi_{\{f\neq 0\}}\nabla|f|
+\dfrac{1\wedge|f|}{|f|}(\nabla f_i-\frac{f_i}{|f|}\nabla|f|)\chi_{\{f\neq 0\}}\\
&= \dfrac{1\wedge|f|}{|f|}\chi_{\{f\neq 0\}}\nabla f_i+\left(\dfrac{1+\sg(1-|f|)}{2}-\dfrac{1\wedge|f|}{|f|}\right)\frac{f_i}{|f|}\chi_{\{f\neq 0\}}\nabla|f|\nonumber
\end{align}
for every $i\in\{1,...,m\}$. Therefore,
\begin{align*}
\A_{\tilde{\omega}} (f,(f-\p_{f}))&:= \sum_{i=1}^{m}\int_{\R^d}\langle Q\nabla f_i ,\nabla (f-\p_{f})_i \rangle\, dx+\sum_{i,j=1}^{m}\int_{\R^d}\left(F_{ij}\cdot\nabla f_j\right) (\overline{f-\p_{f}})_i\, dx \\
  & +\sum_{i,j=1}^{m}\int_{\R^d}f_j \langle C_{ij},\nabla (f-\p_{f})\rangle\, dx+\int_{\R^d}\langle V f,(f-\p_{f})\rangle\, dx +\tilde{\omega} \langle f,(f-\p_{f})\rangle_2 \\
  &=  \tilde{\A}_0(f,f-\p_f )+\mathrm{b} (f,f-\p_f ) +\int_{\R^d}\langle V f,(f-\p_{f})\rangle\, dx+ \tilde{\omega} \langle f,(f-\p_{f})\rangle_2,
\end{align*}
where $$ \tilde{\A}_0(f,f-\p_f)=\displaystyle \sum_{i=1}^{m}\int_{\R^d}\langle Q\nabla f_i ,\nabla (f-\p_f)_i \rangle\, dx $$ 
and
$$  \mathrm{b}(f,f-\p_f )= \sum_{i,j=1}^{m}\int_{\R^d}\left(F_{ij}\cdot\nabla f_j\right) (\overline{f-\p_{f}})_i\, dx +\sum_{i,j=1}^{m}\int_{\R^d}f_j \langle C_{ij},\nabla (f-\p_{f})\rangle dx.$$

Now, one has
 \begin{eqnarray*}
\int_{\R^d}\langle V f,(f-\p_{f})\rangle\, dx = 
\int_{\R^d} \left(1- \dfrac{1\wedge |f|}{|f|}\chi_{\{ f\neq 0 \} } \right)  \langle V f,f\rangle\, dx .
\end{eqnarray*}
Consequently, since by \eqref{cond on V}, $\Re \langle  V f,f\rangle \geq 0 $ a.e., it follows that 
 \begin{equation} \label{VFormula1}
E_{1}:=\Re \int_{\R^d}\langle V f,(f-\p_{f})\rangle\, dx  \geq 0.
\end{equation}
On the other hand, 
\begin{eqnarray}
\tilde{\A}_0(f,f-\p_f )&=&\tilde{\A}_0(f,f )- \tilde{\A}_0(f,\p_f ) \nonumber \\ &= &\sum_{i=1}^{m} \int_{\R^d}\underbrace{\left(1- \dfrac{1\wedge |f|}{|f|}\chi_{\{ f\neq 0 \} } \right)}_{\alpha(|f|)} \langle Q\nabla f_i ,  \nabla f_i \rangle\, dx \nonumber \\ & & +\sum_{i=1}^{m}  \int_{\R^d} \underbrace{\left[ \dfrac{1\wedge |f|}{|f|}\chi_{\{ f\neq 0 \} } -\left(\dfrac{1+\mathrm{sign}(1-|f|)}{2} \right)\chi_{\{ f\neq 0 \} }  \right]}_{\beta(|f|)} \langle Q\nabla f_i , \dfrac{f_i}{|f|} \nabla|f|  \rangle\, dx. \label{A0Formula1}
\end{eqnarray}

Applying an integration by part, one obtains  

\begin{align*}
\mathrm{b} (f,f-\p_f )&= \sum_{i,j=1}^{m}\int_{\R^d}\left(1-\dfrac{1\wedge |f|}{|f|} \chi_{\{ f\neq 0 \} }  \right) \left(F_{ij}\cdot\nabla f_j\right) \bar{f}_i\, dx
 +\sum_{i,j=1}^{m}\int_{\R^d}f_j \langle C_{ij},\nabla (f-\p_{f})\rangle dx.\\
 &= \sum_{i,j=1}^{m}\int_{\R^d}\left(1-\dfrac{1\wedge |f|}{|f|} \chi_{\{ f\neq 0 \} }  \right) \left(F_{ij}\cdot\nabla f_j\right) \bar{f}_i\, dx
 -\sum_{i,j=1}^{m}\int_{\R^d}\mathrm{div}(f_j C_{ij})(\overline{f-\p_f})_i dx\\
 &= \sum_{i,j=1}^{m}\int_{\R^d}\left(1-\dfrac{1\wedge |f|}{|f|} \chi_{\{ f\neq 0 \} }  \right) \left(F_{ij}\cdot\nabla f_j\right) \bar{f}_i\, dx\\
 & -\sum_{i,j=1}^{m}\int_{\R^d}(C_{ij}\cdot\nabla f_j)(\overline{f-\p_f})_i dx - \sum_{i,j=1}^{m}\int_{\R^d}\mathrm{div}(C_{ij})f_j (\overline{f-\p_f})_i dx\\
 & =  \sum_{i,j=1}^{m}\int_{\R^d}\left(1-\dfrac{1\wedge |f|}{|f|} \chi_{\{ f\neq 0 \} }  \right) \left((F_{ij}-C_{ij})\cdot\nabla f_j\right) \bar{f}_i\, dx - \langle \mathrm{div}(C^*)f,f-\p_f\rangle,
\end{align*}

where $\mathrm{div}(C^*):=(\mathrm{div}(C))^*$ is (pointwisely) the conjugate matrix of $\mathrm{div}(C)=(\mathrm{div}(C_{ij}))_{1\le i,j\le m}$.\\

Summing up one obtains
\begin{align*}
\Re \A_{\tilde{\omega}} (f,(f-\p_{f})) =& \Re \A_0 (f,(f-\p_{f}))+\Re \A_1 (f,f-\p_f )+\Re\int_{\R^d}\langle (V+\tilde{\omega} I_m)f,f-\p_f \rangle dx\\
=& \int_{\R^d}\alpha(|f|) \sum_{i=1}^{m} |\nabla f_i|_Q \;dx  +\int_{\R^d} \frac{\beta(|f|)}{|f|} \sum_{i=1}^{m}  \langle \Re(\bar{f_i}\nabla f_i) , \nabla|f| \rangle_Q \; dx\\
&+ \sum_{i,j=1}^{m}\int_{\R^d}\left(1-\dfrac{1\wedge |f|}{|f|} \chi_{\{ f\neq 0 \} }  \right) \Re\left[((F_{ij}-C_{ij})\cdot\nabla f_j)\bar{f}_i\right]  dx \\
 & + \Re\int_{\R^d}\langle(V-\mathrm{div}(C^*)+\tilde{\omega} I_m)f,f-\p_f \rangle dx\\
 =& \int_{\R^d}\alpha(|f|) J_{1}(f)  dx + \int_{\R^d}  \beta(|f|) J_{2}(f)\, dx\\
 & + \Re\int_{\R^d}\langle(V-\mathrm{div}(C^*)+\tilde{\omega} I_m)f,f-\p_f \rangle dx
\end{align*}
where 
\[ J_1(f):=  \sum_{i=1}^{m}  \Re \langle Q\nabla f_i ,  \nabla f_i \rangle + \sum_{i,j=1}^{m}\Re\left[ \left((F_{ij}-C_{ij})\cdot\nabla f_j\right) \bar{f}_i \right] 
\]
and 
\[ J_2(f):= \frac{1}{|f|}\sum_{i=1}^{m}  \langle \Re(\bar{f_i}\nabla f_i) , \nabla|f| \rangle_Q  .
\]
Since by \cite[Lemma 2.4]{Mai-Rha}, one has
\begin{equation*}\label{gradient of norm f}
\nabla |f|=\dfrac{\displaystyle\sum_{j=1}^{m}\Re (\bar{f}_j\nabla f_j)}{|f|}\chi_{\{f\neq 0\}}.
\end{equation*}
Then,
\begin{eqnarray*}
J_{2}(f) &=  &  \frac{1}{|f|}  \langle \sum_{i=1}^{m}\Re(\bar{f_i}\nabla f_i) , \nabla|f| \rangle_Q \\
 & = &   \langle  \nabla |f| ,  \nabla |f|   \rangle_Q \ge 0.
\end{eqnarray*}
Therefore,
\begin{equation}\label{eq J2>0}
\int_{\R^d}\beta(|f|)J_2(f) dx \ge0.
\end{equation}
Moreover, according to \eqref{eq. div C>...} that holds true also for $C^*$, one gets
\begin{align}\label{eq div C+omega>...}\nonumber
\Re\int_{\R^d}\langle(-\mathrm{div}(C^*)+\tilde{\omega} I_m)f,f-\p_f \rangle dx &= \int_{\R^d}\left(1-\dfrac{1\wedge |f|}{|f|} \chi_{\{ f\neq 0 \} }  \right)\langle (-\mathrm{div}(C^*)+\omega)f,f\rangle dx\\ \nonumber
& \ge (\tilde{\omega}-\gamma)\int_{\R^d}\left(1-\dfrac{1\wedge |f|}{|f|} \chi_{\{ f\neq 0 \} }  \right)|f|^2 dx\\
& = (\tilde{\omega}-\gamma)\int_{\R^d}\alpha( |f|)|f|^2 dx.
\end{align}

Now, taking in consideration \eqref{VFormula1}, \eqref{eq J2>0} and \eqref{eq div C+omega>...}, one obtains
\[
\Re \A_{\tilde{\omega}} (f,(f-\p_{f})) \ge  \int_{\R^d}\alpha(|f|) J_{1}(f) \, dx +
(\tilde{\omega}-\gamma)\int_{\R^d}\alpha( |f|)|f|^2 dx.
\]

Moreover, in view of Young's inequality, for every $\varepsilon>0$ there exists $c_\varepsilon>0$ such that 
\begin{eqnarray*}
J_{1}(f) &\geq & \eta_{1} \sum_{i=1}^{m} | \nabla  f_{i} |^{2} - \sum_{i,j=1}^{m}  |\langle (F_{ij}-C_{ij}),\nabla f_j\rangle|\, | f_i | \\
&\geq &   \eta_{1} \sum_{i=1}^{m} |   \nabla f_{i} |^{2} - \sup_{i,j}\|F_{ij}-C_{ij}\|_\infty\sum_{i,j=1}^{m} |  \nabla f_j|\,| f_i | \\
 &\geq &   \eta_{1} \sum_{i=1}^{m}| \nabla  f_{i} |^{2} - \varepsilon \sum_{i=1}^{m}  | \nabla  f_i|^{2}-c_{\varepsilon}\sum_{i=1}^{m}  | f_i | ^{2}
\\ &= &  ( \eta_{1}-\varepsilon) \sum_{i=1}^{m} | \nabla  f_{i} |^{2} - c_{\varepsilon} | f| ^{2}.\\
\end{eqnarray*}

Consequently, for  $ \varepsilon$ being such that $ \eta_{1}>\varepsilon $, say $\varepsilon=\eta_1/2$, and $ \tilde{\omega} > c_{\eta_1/2}+\gamma  $, one gets
\begin{eqnarray*}
\Re \A_{\tilde{\omega}} (f,(f-\p_{f}))&\geq &  \int_{\R^d} \alpha(|f|)  \left[  ( \eta_{1}-\varepsilon) \sum_{i=1}^{m} | \nabla  f_{i} |^{2} + \left(\tilde{\omega}-\gamma- c_{\varepsilon}\right)|f| ^{2}\right]  \, dx \\ 
&\ge & 0
\end{eqnarray*}
and this ends the proof.


\end{proof}
%
%
Hence, we have the following main result of this section. 
\begin{thm}
Let $1<p<\infty$ and assume Hypotheses (H1) and (H2). Then, $\mathcal{L}$ has a realization $L_p$ in $L^p(\R^d,\C^m)$ that generates an analytic $C_0$-semigroup $\{S_p(t)\}_{t\ge 0}$.
\end{thm}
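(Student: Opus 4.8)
The plan is to extrapolate the $L^2$-semigroup of Theorem~\ref{thm Generation L2} by interpolation and then to recover analyticity by complex (Stein) interpolation of the holomorphic family $t\mapsto S_2(t)$. I work with the rescaled semigroup $\{S_2^{\tilde\omega}(t)\}$, which is $L^\infty$-contractive by Proposition~\ref{Charac Ouhab}. Enlarging $\tilde\omega$ if necessary I may assume in addition, using the accretivity and continuity of $\A_{\tilde\omega}$ from Proposition~\ref{prop properties of a}, that $\A_{\tilde\omega}$ is sectorial with vertex $0$; this guarantees both that $\{S_2^{\tilde\omega}(t)\}$ is $L^2$-contractive and, more precisely, that $\|S_2^{\tilde\omega}(z)\|_2\le 1$ for all $z$ in a sector $\{|\arg z|\le\pi/2-\theta_0\}$ with $\theta_0<\pi/2$.

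The first point is to produce the analogous contractivity on $L^1$ by passing to the adjoint. The adjoint form $\A^*(f,g)=\overline{\A(g,f)}$ is associated with the formal adjoint $\mathcal{L}^*g=\mathrm{div}(Q\nabla g)-\tilde F\cdot\nabla g+\mathrm{div}(\tilde C g)-V^*g$, where $\tilde F_{ij}=C_{ji}$ and $\tilde C_{ij}=F_{ji}$. One checks directly that $\mathcal{L}^*$ again satisfies Hypotheses~(H1)--(H2): $Q$ is unchanged, $\tilde F,\tilde C$ are bounded relabelings of $C,F$ and the quadratic forms in \eqref{eq. div F>...}--\eqref{eq. div C>...} are invariant under the exchange of the $m$-indices, while $V^*$ satisfies \eqref{cond on V} because $\Re\langle V^*\xi,\xi\rangle=\Re\langle V\xi,\xi\rangle$ and $\Im\langle V^*\xi,\xi\rangle=-\Im\langle V\xi,\xi\rangle$. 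Hence Proposition~\ref{Charac Ouhab} applies verbatim to $\A^*$, so that $\{(S_2^{\tilde\omega}(t))^*\}$ is $L^\infty$-contractive; by duality this is exactly the $L^1$-contractivity of $\{S_2^{\tilde\omega}(t)\}$.

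Having $L^1$-, $L^2$- and $L^\infty$-contractivity at hand, the Riesz--Thorin theorem gives $\|S_2^{\tilde\omega}(t)f\|_p\le\|f\|_p$ for every $1<p<\infty$ and every $f\in L^2\cap L^p$. By density the operators $S_2^{\tilde\omega}(t)$ extend to a consistent family of contractions on $L^p$, which forms a semigroup $\{S_p^{\tilde\omega}(t)\}$; equivalently $\|S_p(t)\|_p\le e^{\tilde\omega t}$. Strong continuity on $L^p$ follows in the usual way: it holds on the dense subset $C_c^\infty(\R^d,\C^m)$, by comparison with the strong continuity on $L^2$, and then extends to all of $L^p$ by the uniform bound. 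This already yields the realization $L_p$ generating the $C_0$-semigroup $\{S_p(t)\}$.

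The main point is analyticity, since contractivity on the real axis does not by itself transfer the holomorphy from $L^2$. The key observation is that no new form computation is needed: it suffices to combine the $L^2$-contractivity in the sector $\{|\arg z|\le\pi/2-\theta_0\}$ with the endpoint contractivities on $L^1$ and $L^\infty$ through Stein's complex interpolation. Concretely, fix a target $q\in(1,\infty)$, put $p_0=1$ if $q<2$ and $p_0=\infty$ if $q>2$, and fix $0<\phi_0<\pi/2-\theta_0$. For $s>0$ consider the family $F(\zeta):=S_2^{\tilde\omega}(s\,e^{i\phi_0\zeta})$, which is holomorphic and uniformly bounded on the strip $\{0\le\Re\zeta\le1\}$ because $s\,e^{i\phi_0\zeta}$ stays in the analyticity sector. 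On $\Re\zeta=0$ the argument $s\,e^{i\phi_0\zeta}$ is a positive real, so $\|F(iy)\|_{p_0\to p_0}\le1$; on $\Re\zeta=1$ the argument lies in the sector, so $\|F(1+iy)\|_{2\to2}\le1$. Stein interpolation then yields $\|S_2^{\tilde\omega}(s\,e^{i\phi_0\lambda})\|_{q\to q}\le1$ at the value $\lambda\in(0,1)$ determined by $\tfrac1q=\tfrac{1-\lambda}{p_0}+\tfrac{\lambda}{2}$. Since $s>0$ is arbitrary, $\{S_p^{\tilde\omega}(z)\}$ is uniformly bounded on the sector $\{|\arg z|\le\phi_0\lambda\}$ of positive half-angle, which together with strong continuity proves that $\{S_p(t)\}$ is a bounded analytic $C_0$-semigroup on $L^p$. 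The delicate ingredient here is the sectorial $L^2$-contractivity $\|S_2^{\tilde\omega}(z)\|_2\le1$ on $\{|\arg z|\le\pi/2-\theta_0\}$, which is precisely what the sectoriality of $\A_{\tilde\omega}$ --- and hence, through \eqref{cond on V}, the pointwise sectoriality of the potential --- provides.
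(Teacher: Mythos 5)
Your argument is correct and follows essentially the same route as the paper: quasi-$L^\infty$-contractivity from Proposition \ref{Charac Ouhab}, a duality/adjoint step (noting that $\mathcal{L}^*$ again satisfies (H1)--(H2)) to cover the $L^1$ endpoint, and interpolation with the analytic $L^2$-semigroup of Theorem \ref{thm Generation L2}. The only difference is one of detail: where the paper simply invokes Riesz--Thorin and asserts that the extension is analytic, you spell out the Stein interpolation of the holomorphic family $z\mapsto S_2^{\tilde\omega}(z)$ between the sectorial $L^2$-contractivity and the endpoint contractivity, which is the standard way to make that assertion rigorous rather than a genuinely different method.
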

\begin{proof}
Let $2<p<\infty$. Instead of considering $\min(\omega,\tilde{\omega})$, we assume $\omega>\tilde{\omega}$. In view of Theorem \ref{thm Generation L2} and Proposition \ref{Charac Ouhab}, the semigroup $\{S_{2}^{\omega}(t)\}_{t\ge 0}$ is analytic in $L^2(\R^d,\C^m)$ and $L^\infty$--contactive. Therefore, using the Riesz-Thorin interpolation Theorem, $\{S_{2}^{\omega}(t)\}_{t\ge 0}$ has a unique analytic bounded extension $\{S_p^\omega(t)\}_{t\ge 0}$ to $L^p(\R^d,\C^m)$. Moreover, for every  $f\in L^2(\R^d,\C^m)\cap L^\infty(\R^d,\C^m)$, one claims
\begin{align}\label{RiezTorin Strong Conti}\nonumber
 \|S_{p}^{\omega}(t)f-f\|_p &\le \|S_{2}^{\omega}(t)f-f\|_2^\theta\|S_{2}^{\omega}(t)f-f\|_\infty^{1-\theta}\\
 &\le 2^{1-\theta}\|f\|_\infty^{1-\theta}\|S_{2}^{\omega}(t)f-f\|_2^\theta,  
\end{align}
where $\theta=\frac{2}{p}$. Since by Theorem \ref{thm Generation L2}, the semigroup $\{S_{2}^{\omega}(t)\}_{t\ge 0}$ is strongly continuous in  $L^2(\R^d,\C^m)$, it follows directly from \eqref{RiezTorin Strong Conti} that $\{S^{\omega}_p(t)\}_{t\ge 0}$ is strongly continuous in $L^p(\R^d,\C^m)$.\\
For the case $1<p<2$, we argue by duality. Indeed, the adjoint semigroup $ \{S^{*}(t)\}_{t\ge 0} $ is associated to $\mathcal{L}^*$, the formal adjoint of $\mathcal{L}$, where
$$ \mathcal{L}^{*}f:=\mathrm{div}(Q\nabla f)-C^{*}\cdot\nabla f+ \mathrm{div}(F^{*}f)-V^{*}f  .$$ 
Since the coefficients of $ \mathcal{L}^{*} $ satisfy Hypotheses (H1) and (H2), similarly to $\mathcal{L}$, then $ \{S^{*}(t)\}_{t\ge 0} $ is an analytic $C_0$-semigroup in $L^2(\R^d,\C^m)$ which is quasi $L^\infty$-contractive. Consequently, $ \{S(t)\}_{t\ge 0} $ is quasi contractive in $L^1(\R^d,\C^m)$. So, the same interpolation arguments yield an extrapolation of $ \{S(t)\}_{t\ge 0} $ to a holomorpic $C_0$-semigroup in $L^p(\R^d,\C^m)$, for $1<p<2$.
\end{proof}

\begin{rmks}
a) The semigroups $\{S_{p}(t)\}_{t\ge 0}$, $1<p\le2$, can be extrapolated to a strongly continuous semigroup in $L^1(\R^d,\C^m)$. This follows, according to \cite{v92}, as a consequence of the consistency and the quasi-contractivity of $\{S_p(t)\}_{t\ge 0}$, $1<p\le2$.\\
b) If there exists a nonnegative locally bounded function $\mu:\R^d\to\R^+$ such that $\displaystyle\lim_{|x|\to\infty}\mu(x)=+\infty$ and
$$ \langle V_s(x)\xi,\xi\rangle \ge \mu(x)|\xi|^2,\qquad \forall x\in\R^d,\;\forall \xi\in\R^m .$$
 Then, for every $1<p<\infty$, $L_p$ has a compact resolvent and thus $\{S_p(t)\}_{t\ge0}$ is compact. The proof of this claim is identical to \cite[Proposition 4.3]{Mai}.
\end{rmks}

\section{Local elliptic regularity and maximal domain of $L_p$}\label{sec. maximal domain}

Since the coefficients of $\mathcal{L}$ are real, from now on, we consider vector--valued functions with real components. Thus, $L_p$ acts on $D(L_p)\subset L^p(\R^d,\R^m)$, for every $p\in(1,\infty)$ and its associated semigroup $\{S_p(t)\}_{t\ge0}$ acts on $L^p(\R^d,\R^m)$. Moreover, we assume that $C\equiv 0$ and thus
\begin{equation}\label{def 2 of L}
\mathcal{L}f=\mathrm{div}(Q\nabla f)-F\cdot\nabla f-Vf.
\end{equation}

Throughout this section, we use the notation $\Delta_Q:=\mathrm{div}(Q\nabla\cdot)$ and, in addition to Hypotheses (H1), we assume the following \\
\textbf{Hypotheses (H3)}:
\begin{itemize}
\item $q_{ij}\in C_b^1(\R^d)$, for all $i,j\in\{1,\dots,d\}$.
\item $v_{ij}\in L^\infty_\loc(\R^d)$, for all $i,j\in\{1,\dots,m\}$.
\end{itemize}

\begin{rmk}
The assumption $C\equiv0$ is actually without loss of generalities. Indeed, 
for every $f\in C_c^\infty(\R^d,\R^m)$, one has
\begin{align*}
\tilde{\mathcal{L}}f &:= \mathrm{div}(Q\nabla f)-F\cdot\nabla f+\mathrm{div}(Cf)-Vf\\
&=\mathrm{div}(Q\nabla f)-(F-C)\cdot\nabla f-(V-\mathrm{div}(C))f.
\end{align*}
Hence, $\tilde{\mathcal{L}}-\gamma$ has the same expression of \eqref{def 2 of L} and the matrices $Q$, $\tilde{F}:=F-C$ and $\tilde{V}:=V-\mathrm{div}(C)-\gamma I_m$ satisfy Hypotheses (H1) and (H2). 
\end{rmk}

\subsection{Local elliptic regularity}
Here we give a regularity result for weak solutions to systems of elliptic equations. The following theorem generalizes \cite[Theorem 7.1]{Agmon} to the vector valued case. 
\begin{thm}\label{thm ellip reg}
Let $p\in(1,\infty)$ and assume Hypotheses (H1)--(H3). Let $f$ and $g$ belong to $L^p_\loc(\R^d,\R^m)$ such that $\mathcal{L}f=g$ in the distribution sense. Then, $f\in W^{2,p}_{\loc}(\R^d,\R^m)$.
\end{thm}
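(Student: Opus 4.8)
The plan is to exploit the fact that the second--order (principal) part of $\mathcal{L}$ is \emph{diagonal}: writing $\Delta_Q:=\mathrm{div}(Q\nabla\cdot)$, one has $(\mathcal{L}f)_i=\Delta_Q f_i-\sum_{j=1}^m F_{ij}\cdot\nabla f_j-\sum_{j=1}^m v_{ij}f_j$, so the \emph{same} scalar divergence--form operator $\Delta_Q$ acts componentwise and the coupling sits entirely in the lower--order terms. Since $q_{ij}\in C_b^1(\R^d)$, we may expand $\Delta_Q u=\sum_{k,l}q_{kl}\partial_{kl}u+\sum_l\big(\sum_k\partial_k q_{kl}\big)\partial_l u$, a scalar uniformly elliptic operator with continuous principal coefficients and bounded measurable first--order coefficients; it therefore falls within the scope of the scalar $L^p$ elliptic regularity theory, in particular \cite[Theorem 7.1]{Agmon}. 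Testing $\mathcal{L}f=g$ against $C_c^\infty$ functions, each component solves, in the distribution sense, the scalar equation $\Delta_Q f_i=h_i$, with $h_i:=g_i+\sum_j F_{ij}\cdot\nabla f_j+\sum_j v_{ij}f_j$. The whole difficulty is that, a priori, $f\in L^p_\loc$ only, so $h_i$ is not obviously an $L^p_\loc$ function — the term $F_{ij}\cdot\nabla f_j$ is merely a distribution — and \cite[Theorem 7.1]{Agmon} cannot be applied directly.

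I would therefore run a two--step bootstrap. \emph{Step 1 ($L^p_\loc\to W^{1,p}_\loc$).} Using $F_{ij}\in W^{1,\infty}_\loc(\R^d)$ from Hypotheses (H2), rewrite the first--order term in divergence form,
\[
F_{ij}\cdot\nabla f_j=\mathrm{div}(f_j F_{ij})-\mathrm{div}(F_{ij})\,f_j .
\]
Since $\mathrm{div}(F_{ij})\in L^\infty_\loc$ and $v_{ij}\in L^\infty_\loc$, the functions $\mathrm{div}(F_{ij})f_j$ and $v_{ij}f_j$ lie in $L^p_\loc$, while $f_j F_{ij}\in L^p_\loc(\R^d,\R^d)$; hence $h_i=\ell_i+\mathrm{div}(G_i)$ with $\ell_i\in L^p_\loc$ and $G_i\in L^p_\loc(\R^d,\R^d)$, i.e. $\Delta_Q f_i\in W^{-1,p}_\loc$. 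The standard $W^{1,p}$ a priori estimate for divergence--form elliptic operators with continuous (indeed $C_b^1$) coefficients then gives $f_i\in W^{1,p}_\loc$ for every $i$, so $f\in W^{1,p}_\loc(\R^d,\R^m)$.

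\emph{Step 2 ($W^{1,p}_\loc\to W^{2,p}_\loc$).} Once $\nabla f_j\in L^p_\loc$, the bounded coefficients $F_{ij}\in L^\infty$ give $F_{ij}\cdot\nabla f_j\in L^p_\loc$ directly, whence $h_i\in L^p_\loc$ for each $i$. Now \cite[Theorem 7.1]{Agmon} applies to the scalar equation $\Delta_Q f_i=h_i$ and yields $f_i\in W^{2,p}_\loc$, so $f\in W^{2,p}_\loc(\R^d,\R^m)$, as claimed. In both steps the regularity is interior, so to be fully rigorous I would localize with cutoffs $\phi\in C_c^\infty$: the function $\phi f_i$ solves $\Delta_Q(\phi f_i)=\phi h_i+2\langle Q\nabla\phi,\nabla f_i\rangle+\mathrm{div}(Q\nabla\phi)\,f_i$, whose commutator terms involve at most first derivatives of $f_i$ and are thus controlled at the relevant stage of the iteration.

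The main obstacle is precisely the first--order coupling term $F\cdot\nabla f$: it blocks a one--shot application of the scalar $W^{2,p}$ theory, being not an $L^p$ function at the starting regularity. The device that unlocks the argument is to read it as an element of $W^{-1,p}_\loc$ through the divergence identity above — legitimate thanks to the extra regularity $F_{ij}\in W^{1,\infty}_\loc$ — thereby buying the intermediate gain of one derivative needed before Agmon's theorem can be invoked. Everything else reduces to the scalar theory because the principal part is diagonal, and the coupling through $V$ is harmless since $v_{ij}\in L^\infty_\loc$ keeps $Vf$ in $L^p_\loc$ throughout.
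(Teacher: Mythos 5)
Your proposal is correct and follows essentially the same route as the paper: both first use the divergence rewriting $F_{ij}\cdot\nabla f_j=\mathrm{div}(f_jF_{ij})-\mathrm{div}(F_{ij})f_j$ (legitimate since $F_{ij}\in W^{1,\infty}_{\loc}$) to place the localized right-hand side in $W^{-1,p}$, deduce $f\in W^{1,p}_{\loc}$ from the $W^{-1,p}\to W^{1,p}$ elliptic estimate (the paper cites \cite[Proposition 2.2]{BKR06} for this step), and then feed $\Delta_Q f_i\in L^p_{\loc}$ into \cite[Theorem 7.1]{Agmon}. The only difference is presentational: the paper multiplies by the cutoff first and verifies membership in $W^{-1,p}(\R^d)$ globally by testing against $\psi\in C_c^\infty$, whereas you state the localization at the end.
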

\begin{proof}
Let $f=(f_1,\dots,f_m)$ and $g=(g_1,\dots,g_m)$ belong to $L^p_\loc(\R^d,\R^m)$ and assume that $\mathcal{L}f=g$ in the sense of distributions. Hence,
\begin{equation}\label{eq ellip reg}
 \Delta_Q f_i=g_i+\sum_{j=1}^{m}F_{ij}\cdot\nabla f_j+\sum_{j=1}^{m}v_{ij}f_j
\end{equation}
for each $i\in\{1,\dots,m\}$. Now, let $\varphi\in C_c^2(\R^d)$ and $i\in\{1,\dots,m\}$. A straightforward computation yields $$\Delta_Q(\varphi f_i)=\varphi\Delta_Q f_i+(Q\nabla\varphi)\cdot\nabla f_i+(\Delta_Q\varphi)f_i.$$
Then, by \eqref{eq ellip reg} one gets
\[ \Delta_Q(\varphi f_i)=\varphi g_i+\sum_{j=1}^{m}\varphi F_{ij}\cdot\nabla f_j
+(Q\nabla\varphi)\cdot\nabla f_i+\sum_{j=1}^{m}v_{ij}f_j\varphi+(\Delta_Q\varphi)f_i:=\tilde{g}_i.
\]
Actually, $\tilde{g}_i\in W^{-1,p}(\R^d):=(W^{1,p'}(\R^d))^{'}$. Indeed, since $g_i$ and $f_j$ belong to $L^p_\loc(\R^d,\R^m)$, then $\varphi g_i$, $(\Delta_Q\varphi)f_i$ and $v_{ij}f_j\varphi$ lie in $L^p(\R^d)$ and thus in $W^{-1,p}(\R^d)$, for every $j\in\{1,\dots,m\}$. On the other hand, for every $\psi\in C_c^\infty(\R^d)$, one has
\begin{align*}
|(\varphi F_{ij}\cdot\nabla f_j\,,\,\psi)| &=\left|-\int_{\R^d}f_j\mathrm{div}(\varphi\psi F_{ij})\, dx \right|\\
&= \left|\int_{\R^d}f_j\varphi\psi \mathrm{div}(F_{ij})\, dx+\int_{\R^d}f_j\psi \langle  F_{ij},\nabla\varphi \rangle\, dx \right. \\ &+\left. \int_{\R^d}f_j\varphi  \langle F_{ij}, \nabla\psi \rangle\, dx\right| \\
&\le \left(\|\mathrm{div}(F_{ij})\varphi f_j\|_p +\|\langle F_{ij}, \nabla\varphi \rangle f_j\|_p\right) \|\psi\|_{p'} \\&+\|F\|_\infty\|f_j\varphi\|_p \|\nabla\psi\|_{p'}\\
&\le \left( \|\mathrm{div}(F_{ij})\varphi f_j\|_p +\|\langle F_{ij},\nabla\varphi \rangle f_j\|_p+\|F\|_\infty\|f_j\varphi\|_p \right) \|\psi\|_{1,p'},
\end{align*}
which shows that $\varphi F_{ij}\cdot\nabla f_j\in W^{-1,p}(\R^d)$, for every $j\in\{1,\dots,m\}$. Similarly, we get the claim for $(Q\nabla\varphi)\cdot\nabla f_i$. Therefore, for all $\lambda>0$,
$$ (\Delta_Q-\lambda)(\varphi f_i)=\tilde{g}_i-\lambda\varphi f_i\in W^{-1,p}(\R^d). $$
Thus, according to \cite[Proposition 2.2]{BKR06}, $\varphi f_i\in W^{1,p}(\R^d)$ and this is true for every $\varphi\in C_c^\infty(\R^d)$, which implies that $f_i\in W^{1,p}_\loc(\R^d)$.

Now, coming back to \eqref{eq ellip reg}, one obtains $\Delta_Q f_i \in L^{p}_\loc(\R^d)$. We then conclude by \cite[Theorem 7.1]{Agmon} that $f_i$ belongs to $W^{2,p}_\loc(\R^d)$.
\end{proof}

\subsection{$L^p$-maximal domain}
The aim of this section is to coincide the domain $D(L_p)$ of the generator of $\{S_p(t)\}_{t\geq 0}$ with its maximal domain in $L^p(\R^d,\R^m)$. We start by showing that $C_c^\infty(\R^d;\C^m)\subset D(L_p)$.
\begin{lm}
Let $p\ge 1$ and assume Hypotheses (H1)--(H3). Then, $C_c^\infty(\R^d,\R^m)\subset D(L_p)$ and
 $L_p f=\mathcal{L}f$, for all $f \in C_c^{\infty}(\R^d,\R^m)$.
 \end{lm}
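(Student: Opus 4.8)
The plan is to settle the case $p=2$ first, where $L_2$ is by construction the operator associated with the form $\A$, and then to propagate the identity to the whole scale by consistency of the semigroups together with an averaging argument.

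First I would record that $\mathcal{L}f\in L^2(\R^d,\R^m)\cap L^p(\R^d,\R^m)$ for every $f\in C_c^\infty(\R^d,\R^m)$. Since the entries of $Q$ lie in $C_b^1(\R^d)$, the field $\mathrm{div}(Q\nabla f)$ is continuous with compact support; $F\cdot\nabla f$ is bounded with compact support because $F\in L^\infty$; and $Vf$ is bounded with compact support because $V$ is locally bounded by (H3). Hence $\mathcal{L}f$ lies in every $L^q(\R^d,\R^m)$, in particular in $L^2\cap L^p$. Next I would show that $f\in D(L_2)$ with $L_2f=\mathcal{L}f$. As $f\in C_c^\infty(\R^d,\R^m)\subset D(\A)$, it suffices, by the characterization of the operator associated with a form (\cite[Proposition 1.51]{Ouha-book}), to verify that $\A(f,g)=-\langle\mathcal{L}f,g\rangle_2$ for all $g\in D(\A)$. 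This is a direct integration by parts: moving the derivatives off $g$ onto the smooth compactly supported field $Q\nabla f_i$ gives $\sum_{i}\int_{\R^d}\langle Q\nabla f_i,\nabla g_i\rangle\,dx=-\int_{\R^d}\langle\mathrm{div}(Q\nabla f),g\rangle\,dx$, while the drift and potential terms already have the required form; collecting signs (and using $C\equiv0$ in this section) yields $\A(f,g)=-\langle\mathcal{L}f,g\rangle_2$. Since $\mathcal{L}f\in L^2$ and $D(\A)$ is dense, this forces $f\in D(L_2)$ and $L_2f=\mathcal{L}f$.

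The remaining point is the transfer to general $p$. From $f\in D(L_2)$ one has the Bochner integral identity $S_2(t)f-f=\int_0^t S_2(s)\mathcal{L}f\,ds$ in $L^2$. Because the family $\{S_p(t)\}_{t\ge0}$ is consistent with $\{S_2(t)\}_{t\ge0}$ (it is obtained from $S_2$ by interpolation and duality in the previous theorem, and, for $p=1$, by the extension noted in the Remarks via \cite{v92}), one has $S_p(s)\mathcal{L}f=S_2(s)\mathcal{L}f$ for every $s\ge0$, both sides acting on $\mathcal{L}f\in L^2\cap L^p$. Testing the identity against functions in $C_c^\infty(\R^d,\R^m)$ and commuting the integral with the duality pairing shows that the same identity holds in $L^p$, namely $S_p(t)f-f=\int_0^t S_p(s)\mathcal{L}f\,ds$. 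Dividing by $t$ and letting $t\to0^+$, the strong continuity of $\{S_p(t)\}_{t\ge0}$ makes $s\mapsto S_p(s)\mathcal{L}f$ continuous in $L^p$, so its average over $[0,t]$ converges to $\mathcal{L}f$ in $L^p$. Hence $\lim_{t\to0^+}t^{-1}\big(S_p(t)f-f\big)=\mathcal{L}f$ in $L^p$, which is precisely the assertion $f\in D(L_p)$ and $L_pf=\mathcal{L}f$.

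I expect the main obstacle to be this passage of the integral representation from $L^2$ to $L^p$: one must verify that the $L^2$-valued and the $L^p$-valued Bochner integrals define the same function, which is handled by the consistency of the semigroups together with the fact that pairing against a dense set of test functions commutes with integration. By contrast, the $L^2$ step and the integrability of $\mathcal{L}f$ are routine once the form identity $\A(f,g)=-\langle\mathcal{L}f,g\rangle_2$ has been written out.
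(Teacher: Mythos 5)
Your proposal is correct and follows essentially the same route as the paper: first identify $f$ as an element of $D(L_2)$ with $L_2f=\mathcal{L}f$ via integration by parts against the form $\A$, then transfer the integral identity $S_2(t)f-f=\int_0^t S_2(s)\mathcal{L}f\,ds$ to $L^p$ using that $\mathcal{L}f\in L^2\cap L^p$ and the consistency of the semigroups, and finally differentiate at $t=0$. Your write-up merely makes explicit some details (membership of $\mathcal{L}f$ in every $L^q$, the justification that the two Bochner integrals coincide) that the paper leaves implicit.
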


\begin{proof}
Let $f\in C_c^\infty(\R^d,\C^m)$. One has $\mathcal{L}f\in L^2(\R^d,\R^m)$ and integrating by parts, one claims $\langle -\mathcal{L}f,g\rangle_2=\A(f,g)$, for all $g\in D(\A)$. Therefore, $f\in D(L_2)$ and $L_2 f=\mathcal{L}f$. Moreover, one has
\begin{equation}\label{eq all days}
 S_2(t)f-f=\int_0^t S_2(s)\mathcal{L}f \,ds, \qquad \forall t>0.
\end{equation}
Since $\mathcal{L}f\in L^p(\R^d,\R^m)$, for all $p\ge 1$, and by consistency of the semigroups $\{ S_p(t)\}_{t\ge0}$, $p\in [1,\infty)$, Equation \eqref{eq all days} holds true in $L^p(\R^d,\R^m)$, that is
$$ S_p(t)f-f=\int_0^t S_p(s)\mathcal{L}f \,ds, \qquad \forall t>0. $$
By consequence, $f\in D(L_p)$ and $L_p f=\mathcal{L}f$ for all $p\ge 1$.
\end{proof}

We next show that the space of test functions is a core for $L_p$, for $p\in(1,\infty)$. That is, $C_c^\infty(\R^d;\R^m)$ is dense in $D(L_p)$ by the graph norm.
\begin{prop}\label{Prop C_c is a core}
  Let $1<p<\infty$ and assume Hypotheses (H1)--(H3). Then, the set of test functions $C_c^\infty(\R^d;\R^m)$ is a core for $L_p$.
 \end{prop}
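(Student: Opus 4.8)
The plan is to use the standard core criterion: a subspace $D\subseteq D(L_p)$ is a core for $L_p$ if and only if it is dense in $D(L_p)$ for the graph norm $\|f\|_{L_p}:=\|f\|_p+\|L_pf\|_p$ (equivalently, $(\lambda-L_p)(C_c^\infty(\R^d,\R^m))$ is dense in $L^p(\R^d,\R^m)$ for some $\lambda$ in the resolvent set). I would use the first formulation and approximate an arbitrary $u\in D(L_p)$ in graph norm by test functions in two steps, mollification and truncation. The crucial preliminary observation is that $L_pu=\mathcal{L}u\in L^p(\R^d,\R^m)\subseteq L^p_\loc(\R^d,\R^m)$, so Theorem \ref{thm ellip reg} applies and gives $u\in W^{2,p}_\loc(\R^d,\R^m)$; this regularity is what legitimizes the pointwise computations below.

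First I would treat the compactly supported case by mollification. If $w\in W^{2,p}(\R^d,\R^m)$ has compact support and $\rho_\eps$ is a standard mollifier, then $w_\eps:=\rho_\eps\ast w\in C_c^\infty(\R^d,\R^m)$ and $w_\eps\to w$ in $W^{2,p}$. Since the coefficients are regular on a fixed neighbourhood of $\supp w$, namely $Q\in C_b^1$, $F\in L^\infty$ and $V\in L^\infty_\loc$, the convergences $\Delta_Q w_\eps\to\Delta_Q w$, $F\cdot\nabla w_\eps\to F\cdot\nabla w$ and $Vw_\eps\to Vw$ hold in $L^p$, whence $\mathcal{L}w_\eps\to\mathcal{L}w$ in $L^p$. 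As $w_\eps\in C_c^\infty(\R^d,\R^m)\subseteq D(L_p)$ with $L_pw_\eps=\mathcal{L}w_\eps$ by the preceding lemma, closedness of $L_p$ yields $w\in D(L_p)$, $L_pw=\mathcal{L}w$, and $w_\eps\to w$ in graph norm. Thus every compactly supported element of $W^{2,p}$ lies in $D(L_p)$ and is a graph limit of test functions.

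It remains to reduce a general $u\in D(L_p)$ to this case by truncation. Fix $\theta\in C_c^\infty(\R^d)$ with $\theta\equiv1$ on $B_1$, $\supp\theta\subseteq B_2$, and set $\theta_n(x):=\theta(x/n)$. Then $\theta_nu\in W^{2,p}_\loc$ has compact support, hence $\theta_nu\in W^{2,p}(\R^d,\R^m)\subseteq D(L_p)$ by the previous paragraph, and clearly $\theta_nu\to u$ in $L^p$. The point is to show $\mathcal{L}(\theta_nu)\to\mathcal{L}u$ in $L^p$. Expanding the commutator componentwise gives
$$\mathcal{L}(\theta_nu)_i-\theta_n(\mathcal{L}u)_i=(\Delta_Q\theta_n)u_i+2\langle Q\nabla\theta_n,\nabla u_i\rangle-(F\cdot\nabla\theta_n)u_i,$$
all terms supported in the annulus $A_n:=B_{2n}\setminus B_n$. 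The contributions from $\Delta_Q\theta_n$ and $F\cdot\nabla\theta_n$ carry factors $O(n^{-2})$ and $O(n^{-1})$ multiplying $u$, so they are bounded by $C\,n^{-1}\|u\|_{L^p(A_n)}\to0$ since $u\in L^p$. The delicate, and I expect main, obstacle is the gradient term, bounded by $C\,n^{-1}\|\nabla u\|_{L^p(A_n)}$: a priori we only know $\nabla u\in L^p_\loc$. I would try to control it by combining the interior $W^{2,p}$ estimate underlying Theorem \ref{thm ellip reg} with an interpolation inequality $\|\nabla v\|_{L^p(B)}\le\eps\|D^2v\|_{L^p(2B)}+c_\eps\|v\|_{L^p(2B)}$ on unit balls covering $A_n$, trading the gradient tail against the tails of $u$ and of $\mathcal{L}u$. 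Here the genuine subtlety is that $\Delta_Q u=\mathcal{L}u+F\cdot\nabla u+Vu$ involves the possibly unbounded potential $V$, so this estimate does not close for free; if it does not, the fallback is to first establish a global bound $\nabla u\in L^p(\R^d)$ for $u\in D(L_p)$ (reducing by consistency to $u=R(\lambda,L_p)h$ with $h\in C_c^\infty$, where the $L^2$-form estimate gives $\nabla u\in L^2$), after which $n^{-1}\|\nabla u\|_{L^p(A_n)}\to0$ is automatic.

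Granting the vanishing of the gradient term, $\mathcal{L}(\theta_nu)\to\mathcal{L}u=L_pu$ in $L^p$, so $\theta_nu\to u$ in graph norm; chaining this with the mollification of each $\theta_nu$ produces test functions converging to $u$ in the graph norm of $L_p$, proving that $C_c^\infty(\R^d,\R^m)$ is a core. Everything outside the gradient truncation is routine verification, and that step, aggravated by the unboundedness of $V$, is where I expect the real work to lie; an equivalent route through duality, upgrading an annihilator $g$ of $(\lambda-L_p)(C_c^\infty)$ to $W^{2,p'}_\loc$ via Theorem \ref{thm ellip reg} and then forcing $g\equiv0$ by a cutoff energy estimate using the ellipticity (H1) and the sectoriality \eqref{cond on V} of $V$, meets precisely the same difficulty.
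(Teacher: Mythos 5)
Your overall framing (range density, equivalently graph-norm density) and your preliminary steps are sound: invoking Theorem \ref{thm ellip reg} to get $u\in W^{2,p}_{\loc}(\R^d,\R^m)$, and the mollification of compactly supported $W^{2,p}$ functions (which works because $Q\in C^1_b$, $F\in L^\infty$ and $V\in L^\infty_{\loc}$ on a fixed compact set). But the proof does not close: the truncation step stalls exactly where you say it does, and neither of your proposed repairs works under Hypotheses (H1)--(H3). The interpolation route fails because the interior estimate for $\Delta_Q u=\mathcal{L}u+F\cdot\nabla u+Vu$ drags in $\|Vu\|_{L^p}$ over the annuli $A_n$, and nothing controls this globally ($V$ is only locally bounded, with no growth restriction). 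The fallback yields only $\nabla u\in L^2(\R^d)$ via the form domain; that does not give $n^{-1}\|\nabla u\|_{L^p(A_n)}\to0$ for $p\neq2$ (H\"older on $A_n$ costs a factor $n^{d(1/p-1/2)}$ when $p<2$, and goes the wrong way when $p>2$). So as written the argument is incomplete, and a genuinely new idea is still needed at the decisive step.

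More importantly, your closing claim that the duality route ``meets precisely the same difficulty'' is incorrect --- and that route is exactly the paper's proof. Take $g\in L^{p'}$ annihilating $(\lambda-\mathcal{L})C_c^\infty(\R^d,\R^m)$, upgrade it to $W^{2,p'}_{\loc}$ by Theorem \ref{thm ellip reg}, and test the adjoint equation against $\zeta_n(|g|^2+\eps^2)^{\frac{p'-2}{2}}g$ (or $\zeta_n|g|^{p'-2}g$ for $p'\ge2$). The dangerous cutoff--gradient cross term is then a perfect derivative:
\[
\sum_{j=1}^m\int_{\R^d}\langle Q\nabla g_j,\nabla\zeta_n\rangle(|g|^2+\eps^2)^{\frac{p'-2}{2}}g_j\,dx
=\frac{1}{p'}\int_{\R^d}\bigl\langle Q\nabla\bigl((|g|^2+\eps^2)^{p'/2}\bigr),\nabla\zeta_n\bigr\rangle\,dx
=-\frac{1}{p'}\int_{\R^d}(|g|^2+\eps^2)^{p'/2}\,\Delta_Q\zeta_n\,dx,
\]
because the multiplier is built from $g$ itself; since $\|\Delta_Q\zeta_n\|_\infty\to0$, no integrability of $\nabla g$ beyond $L^{p'}_{\loc}$ is ever needed. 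The potential term is simply discarded by its sign ($\Re\langle Vg,g\rangle\ge0$ from \eqref{cond on V}, together with \eqref{eq. div F>...}), and the drift term is absorbed by Young's inequality into the positive gradient term and into $(\lambda-\gamma)\int\zeta_n(\cdots)|g|^2$ for $\lambda$ large; letting $\eps\to0$ and $n\to\infty$ forces $g=0$. This is precisely the mechanism your direct truncation lacks: there $\nabla\theta_n$ pairs with $\nabla u$ of an arbitrary $u\in D(L_p)$ and cannot be integrated by parts a second time. You should switch to the duality argument rather than try to salvage the gradient tail estimate.
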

 \begin{proof}
  Fix $1<p<\infty$ and let $\lambda>\gamma$ be bigger enough so that it belongs to $\rho(L_p)$. It suffices to prove that $(\lambda-L_p)C_c^\infty(\R^d,\R^m)$ is dense in $L^p(\R^d,\R^m)$. For this purpose, let $f\in L^{p'}(\R^d;\R^m)$ be such that $\langle(\lambda-\mathcal{L})\varphi,f\rangle_{p,p'}=0$, for all $\varphi\in C_c^\infty(\R^d;\R^m)$. Then,
 \begin{equation}\label{ellip res equ}
 \lambda f-\Delta_Q f-F^*\cdot\nabla f+(V^*-\mathrm{div}(F)) f=0
 \end{equation}
 in the sense of distributions. By Theorem \ref{thm ellip reg}, one obtains $f_j\in W^{2,p'}_{\loc}(\R^d)$ for all $j\in\{1,\dots,m\}$. Then, \eqref{ellip res equ} holds true almost everywhere on $\R^d$.

Now, consider $\zeta\in C_c^\infty(\R^d)$ such that $\chi_{B(1)}\leq\zeta\leq\chi_{B(2)}$ and define $\zeta_n(\cdot)=\zeta(\cdot/n)$ for $n\in\N$. Assume $p'<2$ and multiply \eqref{ellip res equ} by $\zeta_n (|f|^2+\varepsilon^2)^{\frac{p'-2}{2}}f\in L^p(\R^d,\R^m)$ for $\varepsilon>0,\,n\in\N$. Integrating by parts, one obtains
 \begin{eqnarray*}
 0 &=& \lambda \int_{\R^d}\zeta_n (|f|^2+\varepsilon^2)^{\frac{p'-2}{2}}|f|^2 dx +\sum_{j=1}^{m}\int_{\R^d}\Big\langle\nabla f_j,\nabla\big(\zeta_n (|f|^2+\varepsilon^2)^{\frac{p'-2}{2}}f_j\big)\Big\rangle_{Q} dx \\
 & &+\sum_{i,j=1}^{m}  \int_{\R^d} \zeta_n (|f|^2+\varepsilon^2)^{\frac{p'-2}{2}}f_{i} \langle F_{ji},\nabla f_j \rangle dx \\
 & &  + \int_{\R^d}\zeta_n(|f|^2+\varepsilon^2)^{\frac{p'-2}{2}}\langle (V^*-\mathrm{div}(F^*)) f,f\rangle dx \\
 &\ge & (\lambda-\gamma)\int_{\R^d}\zeta_n(|f|^2+\varepsilon^2)^{\frac{p'-2}{2}}|f|^2\,dx+\sum_{j=1}^{m}\int_{\R^d}|\nabla f_j|_{Q}^2 \zeta_n(|f|^2+\varepsilon^2)^{\frac{p'-2}{2}}\,dx\\
 & & +\sum_{j=1}^{m}\int_{\R^d}\langle\nabla f_j,\nabla\zeta_n\rangle_{Q} (|f|^2+\varepsilon^2)^{\frac{p'-2}{2}}f_j\,dx 
  \\ & & - \| F \|_{\infty}\sum_{i,j=1}^{m}  \int_{\R^d}  \zeta_n (|f|^2+\varepsilon^2)^{\frac{p'-2}{2}} |f_{i} | \, | \nabla  f_j |dx \\
  & & +(p'-2)\sum_{j=1}^{m}\int_{\R^d}\langle\nabla f_j,\nabla|f|\rangle_{Q} f_j|f|\zeta_n(|f|^2+\varepsilon^2)^{\frac{p'-4}{2}}\,dx  \\
 &\ge &  (\lambda-\gamma)  \int_{\R^d}\zeta_n(|f|^2+\varepsilon^2)^{\frac{p'-2}{2}}|f|^2 dx+\int_{\R^d}\sum_{j=1}^{m}|\nabla f_j|^2_Q \zeta_n(|f|^2+\varepsilon^2)^{\frac{p'-2}{2}} dx \\
   & &+ \int_{\R^d}\langle Q\nabla|f|,\nabla\zeta_n\rangle (|f|^2+\varepsilon^2)^{\frac{p'-2}{2}}|f| dx \\
 & & - \delta \sum_{j=1}^{m}  \int_{\R^d}  \zeta_n (|f|^2+\varepsilon^2)^{\frac{p'-2}{2}}  | \nabla  f_j |^{2}_Q dx - C_\delta  \int_{\R^d}  \zeta_n (|f|^2+\varepsilon^2)^{\frac{p'-2}{2}} |f |^{2}dx \\ && +(p'-2)\int_{\R^d}|\nabla|f||^2\zeta_n |f|^2(|f|^2+\varepsilon^2)^{\frac{p'-4}{2}}\,dx  \\
 \end{eqnarray*}
for all $ \delta>0$ and some $C_{\delta}  > 0$. 
Moreover, according to \cite[Lemma 2.4]{Mai-Rha}, one has
$$ |\nabla|f||_Q^2 \le \sum_{j=1}^{m}|\nabla f_j|^2_Q  .$$
So that, choosing $\delta=\delta_p<p'-1$ and $\lambda>\gamma+C_{\delta_p}$, one gets
 \begin{eqnarray*}
 0 &\ge & (\lambda -\gamma -C_{\delta_p} )  \int_{\R^d}\zeta_n(|f|^2+\varepsilon^2)^{\frac{p'-2}{2}}|f|^2\, dx \\ 
 &&+  \int_{\R^d}\langle Q\nabla|f|,\nabla\zeta_n\rangle  (|f|^2+\varepsilon^2)^{\frac{p'-2}{2}}|f| \,dx\\
  & & +(p'-1-\delta)\int_{\R^d}|\nabla|f||^2\zeta_n |f|^2(|f|^2+\varepsilon^2)^{\frac{p'-4}{2}}\,dx\\
  &\ge & (\lambda-\gamma -C_{\delta_p} )   \int_{\R^d}\zeta_n(|f|^2+\varepsilon^2)^{\frac{p'-2}{2}}|f|^2\, dx+\frac{1}{p'}\int_{\R^d}\langle Q\nabla((|f|^2+\varepsilon^2)^{\frac{p'}{2}})),\nabla\zeta_n\rangle \,dx\\
  &=&(\lambda -\gamma -C_{\delta_p} )  \int_{\R^d}\zeta_n(|f|^2+\varepsilon^2)^{\frac{p'-2}{2}}|f|^2\, dx-\frac{1}{p'}\int_{\R^d} \Delta_Q \zeta_n (|f|^2+\varepsilon^2)^{\frac{p'}{2}})\,dx.
 \end{eqnarray*}
 Upon $\varepsilon\to 0$, one obtains
 $$(\lambda -\gamma -C_{\delta_p})  \int_{\R^d}\zeta_n |f|^{p'}\,dx-\frac{1}{p'}\int_{\R^d} \Delta_Q\zeta_n|f|^{p'}\,dx\le 0.$$
A straightforward computation yields
\[ \Delta\zeta_n=\frac{1}{n}\sum_{i,j=1}^{m}\partial_i q_{ij}\partial_j\zeta(\cdot/n)+\frac{1}{n^2}\sum_{i,j=1}^{m}q_{ij}\partial_{ij}\zeta(\cdot/n). \]
So that $\|\Delta_Q \zeta_n\|_\infty$ tends to $0$ as $n\to\infty$. Therefore, upon $n \to \infty$, one claims
 $$\int_{\R^d}|f|^{p'}\, dx\le 0.$$
 Hence, $f=0$.\\
 On the other hand, if $p'\geq 2$, multiplying \eqref{ellip res equ} by $\zeta_n |f|^{p'-2}f$, in a similar way, one gets
 \begin{eqnarray*}
 	0&=&\lambda\int_{\R^d}\zeta_n|f|^{p'} dx+\int_{\R^d}\sum_{j=1}^{m}\langle Q\nabla f_j,\nabla(|f|^{p'-2}f_j\zeta_n)\rangle dx \\
 	& &  + \sum_{i,j=1}^{m}  \int_{\R^d} \zeta_n |f|^{p'-2}f_{i}\langle F_{ji}, \nabla  f_j \rangle dx \\
 	& & +\int_{\R^d}\langle (V^*-\mathrm{div}(F^*))f,f\rangle |f|^{p'-2}\zeta_n dx\\
 	&\geq & (\lambda-\gamma)\int_{\R^d}\zeta_n|f|^{p'} dx+\eta_{1}\sum_{j=1}^{m} \int_{\R^d}|f|^{p'-2}\zeta_n| \nabla f_j|^{2} dx \\
 	& & +\eta_{1}\int_{\R^d}\sum_{j=1}^{m}|f|^{p'-2}f_j\langle \nabla f_j,\nabla \zeta_n\rangle dx \\
 	& & +\eta_{1}(p'-2) \int_{\R^d}|f|^{p'-2}\zeta_n|  \nabla |f| |^{2} dx \\ && - \|F\|_{\infty}\sum_{i,j=1}^{m}  \int_{\R^d} \zeta_n |f|^{p'-2} |f_{i}| | \nabla f_j | dx\\
 	&\geq & (\lambda-\gamma)\int_{\R^d}\zeta_n|f|^{p'} dx+\eta_{1}\sum_{j=1}^{m} \int_{\R^d}|f|^{p'-2}\zeta_n| \nabla f_j|^{2} dx \\
 	& & +\eta_{1}\int_{\R^d}|f|^{p'-1}\langle \nabla| f|,\nabla \zeta_n\rangle dx -  C_\delta \int_{\R^d} \zeta_n |f |^{p'}   dx \\
 	& & +\eta_{1}(p'-2) \int_{\R^d}|f|^{p'-2}\zeta_n|  \nabla |f| |^{2} dx  \\
 	&&-\delta \int_{\R^d} \zeta_n  |f|^{p'-2} | \nabla |f| |^{2} dx\\
 	&\geq &(\lambda -\gamma-C_\delta )\int_{\R^d}\zeta_n|f|^{p'} dx+\eta_{1}\int_{\R^d}|f|^{p'-1}\langle \nabla |f|,\nabla \zeta_n\rangle dx \\ 
 	& &+ \eta_{1}(p'-1) \int_{\R^d}|f|^{p'-2}\zeta_n|  \nabla |f| |^{2} dx\\
 	&\geq &(\lambda -\gamma-C_\delta )\int_{\R^d}\zeta_n|f|^{p'} dx+\frac{\eta_{1}}{p'}\int_{\R^d}\langle \nabla \zeta_n, \nabla |f|^{p'}\rangle dx\\
 	&\geq & (\lambda -\gamma-C_\delta )\int_{\R^d}\zeta_n|f|^{p'} dx-\frac{\eta_{1}}{p'}\int_{\R^d}\Delta_Q\zeta_n |f|^{p'}dx.
 \end{eqnarray*}
 It thus follows that $f=0$ by letting $n$ tends to $\infty$.
 
\end{proof}

We show in the next that the domain $D(L_p)$ is equal to the $L^p$-maximal domain of $\mathcal{L}$.

\begin{prop}\label{Coincidence of domains}
Let $1<p<\infty$ and assume Hypotheses (H1)--(H3). Then
$$D(L_p)=\{f\in L^p(\R^d,\R^m)\cap W^{2,p}_{\loc}(\R^d;\R^m) : \mathcal{L}f\in L^p(\R^d;\R^m)\}:=D_{p,\max}(\mathcal{L}).$$
\end{prop}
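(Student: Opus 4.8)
The plan is to prove the two inclusions separately. The inclusion $D(L_p)\subseteq D_{p,\max}(\mathcal{L})$ is the easy direction: if $f\in D(L_p)$, then $g:=L_p f\in L^p(\R^d,\R^m)$, and because $L_p$ is a realization of the differential expression $\mathcal{L}$, one has $\mathcal{L}f=g$ in the distribution sense. Since $f,g\in L^p\subseteq L^p_\loc$, the local elliptic regularity result (Theorem \ref{thm ellip reg}) immediately gives $f\in W^{2,p}_{\loc}(\R^d,\R^m)$. Combined with $\mathcal{L}f=g\in L^p(\R^d,\R^m)$ and $f\in L^p(\R^d,\R^m)$, this shows $f\in D_{p,\max}(\mathcal{L})$.

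For the reverse inclusion $D_{p,\max}(\mathcal{L})\subseteq D(L_p)$, I would fix $\lambda>\gamma$ large enough to lie in $\rho(L_p)$, take $f\in D_{p,\max}(\mathcal{L})$, and set $g:=(\lambda-\mathcal{L})f\in L^p(\R^d,\R^m)$. Since $\lambda\in\rho(L_p)$, there exists $u:=R(\lambda,L_p)g\in D(L_p)$ with $(\lambda-\mathcal{L})u=g$. The goal is to show $f=u$, so I would study $w:=f-u$, which satisfies $(\lambda-\mathcal{L})w=0$ in the distribution sense with $w\in L^p(\R^d,\R^m)\cap W^{2,p}_{\loc}(\R^d,\R^m)$. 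The task reduces to proving that the only $L^p$-solution of $\lambda w-\Delta_Q w+F\cdot\nabla w+Vw=0$ is $w=0$.

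The core of the argument is therefore a uniqueness (Liouville-type) estimate, and this is where I expect the main obstacle to lie. The natural route is to mimic the multiplier computation already carried out in the proof of Proposition \ref{Prop C_c is a core}, but now applied to $\mathcal{L}$ itself rather than its formal adjoint. Concretely, I would multiply the equation by $\zeta_n|w|^{p-2}w$ (for $p\ge 2$) or by the regularized version $\zeta_n(|w|^2+\varepsilon^2)^{(p-2)/2}w$ (for $1<p<2$), where $\zeta_n(\cdot)=\zeta(\cdot/n)$ is the same cutoff family, integrate by parts, and use the ellipticity of $Q$, the bound $|\nabla|w||_Q^2\le\sum_j|\nabla w_j|_Q^2$ from \cite[Lemma 2.4]{Mai-Rha}, Young's inequality to absorb the first-order term $F\cdot\nabla w$, the sectoriality condition \eqref{cond on V} to control the potential term $\Re\langle Vw,w\rangle\ge 0$, and the divergence bound \eqref{eq. div F>...}. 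This yields, for $\lambda$ large, an inequality of the form $(\lambda-\gamma-C_\delta)\int_{\R^d}\zeta_n|w|^{p}\,dx\le \frac{\eta_1}{p}\int_{\R^d}|\Delta_Q\zeta_n|\,|w|^{p}\,dx$.

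The final step is to let $n\to\infty$: since $\|\Delta_Q\zeta_n\|_\infty\to 0$ by the explicit computation of $\Delta\zeta_n$ (exactly as at the end of Proposition \ref{Prop C_c is a core}, using $q_{ij}\in C_b^1$), and since $w\in L^p$ guarantees $\int_{\R^d}\zeta_n|w|^p\,dx\to\|w\|_p^p$, the right-hand side vanishes and one concludes $\int_{\R^d}|w|^p\,dx\le 0$, hence $w=0$ and $f=u\in D(L_p)$. The delicate points are ensuring that all integrations by parts are justified using only the local $W^{2,p}$-regularity of $w$ together with its global $L^p$-integrability, and handling the two ranges $1<p<2$ and $p\ge 2$ separately via the $\varepsilon$-regularization; both are essentially already treated in the proof of Proposition \ref{Prop C_c is a core}, so I would invoke that computation rather than repeat it in full.
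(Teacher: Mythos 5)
Your proof is correct, and for the substantive direction $D_{p,\max}(\mathcal{L})\subseteq D(L_p)$ it is essentially the paper's argument: the paper also reduces to showing that $\lambda-\mathcal{L}$ is injective on $D_{p,\max}(\mathcal{L})$ for large $\lambda$ (phrased via $\lambda\in\rho(\mathcal{L}_{p,\max})\cap\rho(L_p)$ and $L_p\subset\mathcal{L}_{p,\max}$ rather than via $w=f-R(\lambda,L_p)g$, but this is the same reasoning), and it likewise disposes of the uniqueness step by invoking the weighted-multiplier computation from the proof of Proposition \ref{Prop C_c is a core}; note that for the direct equation the drift enters as $F\cdot\nabla w$ with no $\mathrm{div}(F)$ term, so Young's inequality and $\langle V w,w\rangle\ge0$ already suffice there. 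Where you genuinely diverge is the easy inclusion $D(L_p)\subseteq D_{p,\max}(\mathcal{L})$: the paper does \emph{not} apply Theorem \ref{thm ellip reg} here, but instead approximates $f\in D(L_p)$ by test functions in the graph norm (using the core property), localizes via the auxiliary operator $\Lambda=\mathcal{L}-2\langle Q\nabla\phi,\nabla\cdot\rangle$ with Dirichlet boundary conditions on a bounded $\Omega$, and deduces $\phi f\in W^{2,p}(\Omega,\R^m)$ from elliptic $W^{2,p}$-theory on bounded domains; your direct appeal to the local elliptic regularity theorem is shorter and avoids that machinery. The one step you should not treat as automatic is the identity $\mathcal{L}f=L_pf$ in the sense of distributions for $f\in D(L_p)$, which your application of Theorem \ref{thm ellip reg} presupposes: it is not part of the definition of $L_p$ (obtained by form methods in $L^2$ and extrapolation), but it does follow readily from Proposition \ref{Prop C_c is a core} by taking $f_n\in C_c^\infty(\R^d,\R^m)$ with $f_n\to f$ and $\mathcal{L}f_n\to L_pf$ in $L^p$ and passing to the limit in $(\mathcal{L}f_n,\varphi)$ for each test function $\varphi$ (using $q_{ij}\in C_b^1$, $F$ bounded and $V$ locally bounded). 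With that sentence added, your argument is complete and arguably cleaner than the paper's.
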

\begin{proof}
We first show that $D(L_p)\subseteq D_{p,\max}(\mathcal{L})$. Let $f\in D(L_p)$ and $(f_n)_n\subset C_c^\infty(\R^d,\R^m)$ such that $f_n\to f$ and $\mathcal{L}f_n\to L _p f$ in $L^p(\R^d,\R^m)$. Let $\Omega$ be a bounded domain of $\R^d$ and $\phi\in C_c^2(\Omega)$. Consider, on $\Omega$, the differential operator $$ \Lambda=\mathcal{L}-2\langle Q\nabla\phi
,\nabla\cdot\rangle. $$
A straightforward computation yields
$$ \Lambda(\phi f_n)=\phi\mathcal{L}f_n+(\Delta_Q\phi-2\langle Q\nabla\phi
,\nabla\phi\rangle)f_n+\sum_{j=1}^{m}\langle F_{ij},\nabla\phi\rangle \langle f_n,e_j\rangle. $$
Thus, $(\Lambda(\phi f_n))_{n}$ converges in $L^p(\Omega,\R^m)$. Taking into the account that $\Lambda$ is an elliptic operator with bounded coefficients on $\Omega$, thus the domain of $\Lambda$, with Dirichlet boundary condition, coincides with $W^{2,p}(\Omega,\R^m)\cap W^{1,p}_0(\Omega,\R^m)$. In particular, $(\phi f_n)_{n}$ converges in $W^{2,p}(\Omega,\R^m)$, which implies that $\phi f\in W^{2,p}(\Omega,\R^m)$. Now, the arbitrariness of $\Omega$ and $\phi$ yields $f\in W^{2,p}_{\loc}(\R^d,\R^m)$. Furthermore, $(\mathcal{L}f_n)_n$ converges locally in $L^p(\R^d,\R^m)$ to $\mathcal{L}f$ and by pointwise convergence of subsequences, one claims $L_p f=\mathcal{L}f$.

In order to prove the other inclusion it suffices to show that $\lambda-\mathcal{L}$ is one to one on $D_{p,\max}(\mathcal{L})$, for some $\lambda>0$. Indeed, this implies that $\lambda\in\rho(\mathcal{L}_{p,\max})\cap \rho(L_p)$, where $\mathcal{L}_{p,\max}$ is the realization of $\mathcal{L}$ on $D_{p,\max}(\mathcal{L})$. Since $D_{p,\max}(\mathcal{L})\subset D(L_p)$, thus $L_p=\mathcal{L}_{p,\max}$.
Now, let $f\in D_{p,\max}(\mathcal{L})$ be such that $(\lambda-\mathcal{L})f=0$. Arguing similarly as in the proof of Proposition \ref{Prop C_c is a core}, one obtains $f=0$ and this ends the proof.

\end{proof}
\begin{rmk}
It is relevant to have $D(L_p)\subset W^{2,p}(\R^d,\R^m)$, for $1<p<\infty$, which is equivalent to the coincidence of domains $D(L_p)=W^{2,p}(\R^d,\R^m)\cap D(V_p)$, where $D(V_p)$ refers to the maximal domain of multiplication by $V$ in $L^p(\R^d,\R^m)$. Actually, in \cite[Section 3]{Mai-Rha}, it has been shown the following
$$  \|f\|_{2,p}+\|Vf\|_p \le C( \|\Delta_Q f-Vf\|_p +\|f\|_p)  $$
for all $f\in W^{2,p}(\R^d,\R^m)\cap D(V_p)$, provided that $V=\hat{V}+v I_m$, with $0\le v \in W^{1,p}_\loc(\R^d)$ such that $|\nabla v|\le C v$ and $\hat{V}$ satisfies 
$$ \sup_{1\le j\le m}\|(\partial_j \hat{V}) \hat{V}^{-\gamma}\|_\infty<\infty $$
for some $\gamma\in[0,1/2)$. Now, taking into the account, the Landau's inequality
$$  \|\nabla f\|_p\le \varepsilon \|\Delta_Q f\|_p+M_\varepsilon \|f\|_p ,$$
for every $\varepsilon>0$, one claims
$$  \|f\|_{2,p}+\|Vf\|_p \le C'( \|L_p f\|_p +\|f\|_p) .  $$
Therefore, $D(L_p)=W^{2,p}(\R^d,\R^m)\cap D(V_p)$. 
\end{rmk}

\section{Positivity}\label{sect 5}
In this section we characterize the positivity of the semigroup $\{S_{p}(t)\}_{t\ge0}$ for $ 1<p<\infty $. Since the family of semigroups $\{S_{p}(t)\}_{t\ge0}$, $p\in[1,\infty)$, is consistent, i. e., $ S_p(t)f=S_q(t)f $, for every $t\ge0$, $1\le p,q<\infty$ and all $f\in L^p(\R^d,\R^m)\cap L^q(\R^d,\R^m)$, it suffices to characterize the positivity of $\{S_{2}(t)\}_{t\ge0} $. For this purpose, we endow $\R^m$ with the usual partial order: $x\ge y$ if and only if, $x_i\ge y_i$, for all $i\in\{1,\dots,m\}$.  As in Section \ref{sec. maximal domain}, we assume that $C\equiv0$. By positivity of $\{S_{2}(t)\}_{t\ge0}$ we mean $S_{2}(t)f\ge0$ a.e., for every $t\ge0$ and all $f\in L^2(\R^d,\R^m)$ such that $f\ge0$ a.e.

We apply the Ouhabaz' criterion for invariance of closed convex subsets by semigroups, c.f. \cite[Theorem 3]{Ouhabaz 99} and \cite{Ouhabaz 96}. We then get the following result

\begin{thm}
Assume Hypotheses (H1). Then, the semigroup $\{S_{2}(t)\}_{t\ge0}$ is positive, if and only if, $F_{ij}=0$ and $v_{ij}\le0$ almost everywhere and for every $i\neq j\in\{1,\cdots,m\}$.
\end{thm}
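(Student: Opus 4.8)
The plan is to invoke Ouhabaz' invariance criterion \cite[Theorem 3]{Ouhabaz 99} for the closed convex cone $\mathcal{C}_+:=\{f\in L^2(\R^d,\R^m):f\ge0 \text{ a.e.}\}$, whose orthogonal projection is the componentwise positive part $Pf=f^+=(f_1^+,\dots,f_m^+)$, so that $f-Pf=-f^-$. Since positivity of $\{S_2(t)\}$ is unchanged under the rescaling $S_2^\omega(t)=e^{-\omega t}S_2(t)$ and $\langle f^+,f^-\rangle_2=0$ forces the shift $\omega$ to drop out, applying the criterion to the accretive form $\A_\omega$ reduces (working in the real setting, where $\Re$ is irrelevant) to the two requirements: (i) $f\in D(\A)\Rightarrow f^+\in D(\A)$; and (ii) $\A(f^+,f^-)\le0$ for all $f\in D(\A)$.

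I would next compute $\A(f^+,f^-)$. Because $\nabla f_i^+=\chi_{\{f_i>0\}}\nabla f_i$ and $\nabla f_i^-=-\chi_{\{f_i<0\}}\nabla f_i$ have disjoint supports, the diffusion term vanishes; likewise the diagonal parts of the drift and potential terms vanish since $f_i^+f_i^-=0$. Only off-diagonal contributions survive:
\[
\A(f^+,f^-)=\sum_{i\ne j}\int_{\R^d}(F_{ij}\cdot\nabla f_j^+)\,f_i^-\,dx+\sum_{i\ne j}\int_{\R^d}v_{ij}\,f_j^+\,f_i^-\,dx.
\]
For \emph{sufficiency}, assume $F_{ij}=0$ and $v_{ij}\le0$ for $i\ne j$. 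Then the first sum is zero and the second is $\le0$ since $f_j^+,f_i^-\ge0$, giving (ii). For (i), $f\mapsto f^+$ preserves $H^1(\R^d,\R^m)$, and $v_{ij}\le0$ ($i\ne j$) forces the off-diagonal entries $(V_s)_{ij}=\frac12(v_{ij}+v_{ji})$ to be $\le0$; hence the identity $\langle V_sf,f\rangle-\langle V_sf^+,f^+\rangle=\langle V_sf^-,f^-\rangle-2\langle V_sf^+,f^-\rangle$ has both terms on the right nonnegative (the first because $V_s\ge0$, the second because the off-diagonal entries of $V_s$ are $\le0$ while $f_i^+f_j^-\ge0$), so $\int\langle V_sf^+,f^+\rangle\le\int\langle V_sf,f\rangle<\infty$ and $f^+\in D(\A)$. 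Thus $\mathcal{C}_+$ is invariant and the semigroup is positive.

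For \emph{necessity}, positivity yields (ii) for every $f\in D(\A)$. Fixing $i\ne j$ and testing with $f=\phi\,e_j-\psi\,e_i$ for nonnegative $\phi,\psi\in C_c^\infty(\R^d)$ (which lie in $D(\A)$ because $V_s\in L^1_\loc$), so that $f_j^+=\phi$, $f_i^-=\psi$ and all other parts vanish, the inequality collapses to
\[
\int_{\R^d}(F_{ij}\cdot\nabla\phi)\,\psi\,dx+\int_{\R^d}v_{ij}\,\phi\,\psi\,dx\le0 .
\]
Taking $\phi\equiv1$ on a neighbourhood of $\supp\psi$ kills the gradient term and leaves $\int v_{ij}\psi\,dx\le0$ for all $\psi\ge0$, whence $v_{ij}\le0$ a.e. To force $F_{ij}=0$ I would run a concentration argument: at a Lebesgue point $x_0$ of $F_{ij}$ with $F_{ij}(x_0)\ne0$, take $\psi_\delta$ a bump of height $1$ in $B(x_0,\delta)$ and $\phi(x)=L\big(\delta+(x-x_0)\cdot e\big)$ on $B(x_0,\delta)$ with $e=F_{ij}(x_0)/|F_{ij}(x_0)|$, cut off to remain nonnegative and compactly supported. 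The drift term is then of order $L\,|F_{ij}(x_0)|\,\delta^d$, whereas $\int|v_{ij}|\phi\psi=O(L\,\delta\!\int_{B(x_0,\delta)}|v_{ij}|)=O(L\,\delta^{d+1})$; dividing by $L\delta^d$ and letting $\delta\to0$ contradicts the displayed inequality unless $F_{ij}(x_0)=0$, so $F_{ij}=0$ a.e.

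The cancellations yielding the formula for $\A(f^+,f^-)$ and the sufficiency direction are routine once the disjoint-support structure is noticed. The genuine difficulties are twofold: first, the domain-invariance (i) is \emph{false} for a general positive semidefinite $V_s$ and succeeds precisely because the hypothesis $v_{ij}\le0$ makes $V_s$ off-diagonally nonpositive; second, the concentration argument forcing $F_{ij}=0$, where under the sole assumption $F_{ij}\in L^\infty$ the first-order (drift) and zeroth-order (potential) contributions must be separated purely through their different scaling in $\delta$. This last step is the main obstacle.
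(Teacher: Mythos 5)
Your proof is correct and rests on the same pillar as the paper's, namely Ouhabaz' invariance criterion for the positive cone together with the observation that the disjoint supports of $f_i^+$ and $f_i^-$ annihilate the diffusion term and all diagonal lower-order terms, so only the off-diagonal drift and potential contributions survive. Two sub-steps, however, are carried out differently. For the necessity of $F_{ij}=0$ the paper tests with $g=e^{nx_k}\varphi e_i-e^{-nx_k}\varphi e_j$, divides by $n$ and lets $n\to\infty$ so that only $\int F_{ij}^{(k)}\varphi^2\,dx$ survives, then repeats with the exponentials swapped to obtain both signs of each component; you instead concentrate at a Lebesgue point with a steep linear $\phi$ and separate the first- and zeroth-order terms through their scaling in $\delta$. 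Both work; your estimate $\int_{B(x_0,\delta)}|v_{ij}|\,dx=O(\delta^d)$ does require $x_0$ to be a Lebesgue point of $v_{ij}$ as well as of $F_{ij}$, which you should state explicitly, but this costs nothing since almost every point qualifies. The paper's exponential trick avoids Lebesgue-point bookkeeping and yields the conclusion component by component, while your concentration argument is more geometric and would tolerate weaker integrability of $F$. For the domain invariance $f^+\in D(\A)$ in the sufficiency direction the paper simply cites \cite[Theorem 4.2]{Mai}, whereas you give a self-contained pointwise proof via the identity $\langle V_sf,f\rangle-\langle V_sf^+,f^+\rangle=\langle V_sf^-,f^-\rangle-2\langle V_sf^+,f^-\rangle$, and you correctly observe that this step genuinely uses the off-diagonal nonpositivity of $V_s$ and can fail for a general positive semidefinite $V_s$ --- a point the paper leaves implicit.
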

\begin{proof}
Let $\mathcal{C}=\{f\in L^2(\R^d,\R^m): f\ge0\; \mathrm{a.e.}\}$ and $\mathcal{P}_+ f=f^+=(f_i^+)_{1\le i\le m}$, where $f_{i}^+=\max(0,f_i)$. Then, $\mathcal{C}$ is a closed convex subset of $L^2(\R^d,\R^m)$ and $\mathcal{P}_+$ is the corresponding projection. Now, let $\omega\ge0$ such that $\A_\omega$ is accretive. According to \cite[Theorem 3 (iii)]{Ouhabaz 99}, $\{S_{2}(t)\}_{t\ge0}$ is positive if, and only if, the form $\A$ satisfies the following
\begin{itemize}
\item $f\in D(\A)$ implies $f^+\in D(\A)$,
\item $\A_\omega(f^+,f^-)\le0$, for all $f\in D(\A)$, where $f^-=f-f^+$.
\end{itemize}
Now, assume that $\{S_{2}(t)\}_{t\ge0}$ is positive. Let $i\neq j\in\{1,\dots,m\}$, $n\in\N$ and $0\le\varphi\in C_c^\infty(\R^d)$. Set $f=\zeta_n e_i-\varphi e_j$. One has
\begin{align*}
0\ge \A_\omega(f^+,f^-)=\frac{1}{n}\int_{\R^d}\langle F_{ij},\nabla\zeta(\cdot/n)\rangle\,\varphi\, dx+\int_{\R^d}v_{ij}\zeta_n\varphi \,dx.
\end{align*}
Letting $n\to\infty$, by dominated convergence theorem, one gets $\int_{\R^d}v_{ij}\varphi\, dx\le0$ for every $0\le\varphi\in C_c^\infty(\R^d)$, which implies that $v_{ij}\le0$ almost everywhere. On the other hand, considering, for every $n\in\N$, 
$$g(x)=g^{(k,n)}(x):=\exp(nx_k)\varphi(x)e_i-\exp(-nx_k)\varphi(x) e_j,$$
 where $x_k$ is the k-th component of $x\in\R^d$, for every $k\in\{1,\cdots,d\}$. Then,
$$ \nabla g^+_i=n\exp((nx_k)\varphi e_k+\exp(nx_k)\nabla\varphi.$$
Therefore,
\begin{align*}
0\ge \frac{1}{n}\A_\omega(g^+,g^-)&=\int_{\R^d} F_{ij}^{(k)}\varphi^2\,dx+\frac{1}{n}\int_{\R^d}\langle F_{ij},\nabla\varphi\rangle\varphi\,dx\\
&\quad +\frac{1}{n}\int_{\R^d}v_{ij}\varphi^2 \,dx,
\end{align*}
where $F_{ij}^{(k)}$ indicates the $k$-th component of $F_{ij}$. So, by letting $n\to\infty$, one deduces that $F_{ij}^{(k)}\le0$ almost everywhere and for each $k\in\{1,\cdots,d\}$. In a similar way, one gets $F_{ij}^{(k)}\ge0$ a.e. by considering $\tilde{g} $ instead of $g$, where
$$\tilde{g}(x)=\tilde{g}^{(k,n)}(x):=\exp(-nx_k)\varphi(x)e_i-\exp(nx_k)\varphi(x) e_j.$$  
So that $F_{ij}=0$ almost everywhere.

Conversely, assume $F_{ij}=0$ and $v_{ij}\le0$ for all $i\neq j\in\{1,\dots,m\}$. Let $f\in D(\A)$, then, by \cite[Theorem 4.2]{Mai}, one gets $f^+\in D(a)$. Furthermore, it follows, by \cite[Theorem 7.9]{Gilb-Tru}, that $\nabla f^+_i=\chi_{\{f_i>0\}}\nabla f_i$ and $\nabla f^-_i=\chi_{\{f_i<0\}}\nabla f_i$. Let us now prove that $\A_\omega(f^+,f^-)\le 0$. One has
\begin{eqnarray*}
 \A_\omega(f^+,f^-) &=&\displaystyle\sum_{i=1}^{m}\int_{\R^d}\langle Q\nabla f^{+}_i ,\nabla f^{-}_i \rangle\, dx +\sum_{i=1}^{m}\int_{\R^d}\langle F_{ii}, \nabla f^{+}_i\rangle  f^{-}_i\, dx\\
 & &+\sum_{i,j=1}^{m}\int_{\R^d} v_{i,j} f_{i}^{+}f_{j}^{-} \, dx + \omega \langle f^{+},f^{-}  \rangle_{2}\\
 &=&  \sum_{i\neq j}^{m}\int_{\R^d} v_{i,j} f_{i}^{+}f_{j}^{-}  \, dx 
 \\ & \leq& 0 .
\end{eqnarray*}

This ends the proof.
\end{proof}


\begin{thebibliography}{99}
\bibitem{aalt} D. Addona, L. Angiuli, L. Lorenzi, M. Tessitore, \emph{On coupled systems of Kolmogorov equations with applications to stochastic differential games}, ESAIM Control. Optim. Calc. Var., \textbf{23} no 3 (2017), 937--976.

\bibitem{Agmon}
\newblock S. Agmon,
\newblock \emph{The $L_{p}$ approach to the Dirichlet problem. I.
              Regularity theorems}, 
\newblock Ann. Scuola Norm. Sup. Pisa (3), {\bf 13} (1959), 405--448. 

\bibitem{ALP} L. Angiuli, L. Lorenzi, D. Pallara, \emph{$L^p$-estimates for parabolic systems with unbounded coefficients coupled at zero and first order}, J. Math. Anal. App., {\bf 444} (2016),  110--135.

\bibitem{BKR06} V. I. Bogachev, N. V. Krylov, M. R\"ockner, \emph{Elliptic equations for measures: Regularity and global bounds of densities}, J. Math. Pures Appl., \textbf{85} (2006), 743–757.

\bibitem{Cialdea} A. Cialdea,  \emph{The $L^p $-dissipativity of partial differential operators}, Lecture Notes of TICMI, Tbilisi University Press, 2010.

\bibitem{DL11} S. Delmonte, L. Lorenzi, \emph{On a class of weakly coupled systems of elliptic operators with
unbounded coefficients}, Milan J. Math. \textbf{79} (2011), 689--727.


\bibitem{Fried71}
A. Friedman, \emph{Differential games}, Pure and Applied Mathematics \textbf{25}, Wiley-Interscience, New York-London, 1971.

\bibitem{Fried72}
A. Friedman, \emph{Stochastic differential games}, J. Diff. Equa. \textbf{11} (1972), 79--108.
\bibitem{Gilb-Tru}
D. Gilbarg and N. Trudinger,
\emph{Elliptic Partial Differential Equations of Second Order,}
Springer-Verlag, 1983.


\bibitem{hetal09} M. Hieber, L. Lorenzi, J. Pr\"uss, R. Schnaubelt, A. Rhandi, \emph{Global properties of generalized {O}rnstein-{U}hlenbeck operators on {$L^p(\mathbb{R}^N,\mathbb{R}^N)$} with more than linearly growing coefficients}, J. Math. Anal. Appl., \textbf{350} (2009), 100--121.

\bibitem{HieberRha} M. Hieber, A. Rhandi,  \& O. Sawada,  \emph{The Navier-Stokes flow for globally Lipschitz continuous initial data},  (Kyoto Conference on the Navier-Stokes Equations and their Applications), (2007).

\bibitem{hieber} M. Hieber, O. Sawada, \emph{The {N}avier-{S}tokes equations in {$\mathbb{R}^n$} with linearly growing initial data}, Arch. Ration. Mech. Anal., \textbf{175} (2005), 269--285.

\bibitem{Kato} T. Kato, \emph{On some Schr\"odinger operators with a singular complex potential}, Ann. Scuola
	Norm. Sup. Pisa Cl. Sci. (4), \textbf{5} (1978), 105--114.

\bibitem{KLMR}
\newblock M. Kunze, L. Lorenzi, A. Maichine, A. Rhandi,
\newblock \emph{$L^p$-theory for Schr\"odinger systems},
\newblock Math. Nachr. \textbf{292} 1763--1776, (2019).

\bibitem{KMR} M. Kunze, A. Maichine, A. Rhandi, \emph{Vector--valued Schr\"odinger operators on $L^p$--spaces,} Dis. Cont. Dyn. Sys. Serie S (2020), doi: 10.3934/dcdss.2020086.

\bibitem{Luca} L. Lorenzi, \emph{Analytical Methods for Kolmogorov Equations}. Second edition," Monograph and research notes in Mathematics, Chapman $\and$ Hall/CRC, Boca Raton, FL, 2017.

\bibitem{Mai} A. Maichine, \emph{Generation of semigroup for symmetric matrix Schr\"odinger operators in $L^p$--spaces,} Semigroup Forum \textbf{99}, (2019) 245--259.


\bibitem{Mai-Rha}
\newblock A. Maichine, A. Rhandi,
\newblock \emph{On a polynomial scalar perturbation of a Schr\"odinger system in $L^p$-spaces},
\newblock J. Math. Anal. Appl., {\bf 466} (2018), 655--675.

\bibitem{Mann04}
P. Mannucci, \emph{Nonzero-sum stochastic differential games with discontinuous feedback}, SIAM J. Control Optim. \textbf{43} 4 (2004), 1222--1233.

\bibitem{MS12} G. Metafune, C. Spina, \emph{Elliptic operators with unbounded diffusion coefficients in $L^p$ spaces}, Ann. Scuola Norm. Sup. Pisa Cl. Sci.,\textbf{XI} (2012), 303--340.

\bibitem{Ouha-book}    E. -M. Ouhabaz,
    \emph{Analysis of Heat Equations on Domains},
    London Math. Soc. Monogr. Ser., $\textbf{31}$,
    Princeton Univ. Press, $2004$.
\bibitem{Ouhabaz 96}
E. -M. Ouhabaz, \emph{Invariance of closed convex sets and domination criteria for semigroups},
Potential Analysis, \textbf{5} (1996), 611--625.

\bibitem{Ouhabaz 99}
E. -M. Ouhabaz,
\emph{$L^p$ contraction semigroups for vector valued functions},
Positivity, \textbf{3} (1999), 83--93.

\bibitem{PRS06}
Pr\"uss, A. Rhandi, R. Schnaubelt, \emph{The domain of elliptic operators on $L^p(\R^d)$ with unbounded
drift coefficients}, Houston J. Math., \textbf{32} (2006), 563--576.

\bibitem{Triebel13}
H. Triebel, \emph{Local Function Spaces, Heat and Navier–Stokes Equations}, EMS Tracts in Mathematics, EMS Tracts in Mathematics, \textbf{20}, 2013.

\bibitem{Triebel15}
H. Triebel, \emph{Hybrid function spaces, heat and Navier-Stokes equations}, EMS Tracts in Mathematics, \textbf{24}, 2015.

\bibitem{v92} J. Voigt, \emph{One-parameter semigroups acting simultaneously on different {$L_p$}-spaces}, Bull. Soc. Roy. Sci. Li\`ege, \textbf{61} (1992), 465--470. 



\end{thebibliography}
\end{document}